\newtheorem{theorem}{Theorem}[section]
\newtheorem*{theorem*}{Theorem}
\newtheorem{lemma}[theorem]{Lemma}
\theoremstyle{definition}
\newtheorem{definition}[theorem]{Definition}
\theoremstyle{remark}
\newcommand{\SC}{\mathcal{S}}
\newcommand{\mS}{\mathbb{S}}
\newcommand{\N}{\mathbb{N}}
\newcommand{\TDA}{\textsmaller{TDA}}
\newcommand{\tSC}{\textsmaller{SC}}
\newcommand{\R}{\mathbb{R}}
\newcommand{\bound}{\mathcal{B}}
\newcommand{\betti}{B}
\newcommand{\fspace}{\mathcal{X}}
\newcommand{\feat}{\mathcal{F}}
\newcommand{\mI}{\mathcal{I}}
\newcommand{\TPCC}{\textsmaller{TPCC}}
\newcommand{\CW}{\textsmaller{CW}~}
\newcommand{\charEV}[1]{\widehat{e}_{#1}}
\DeclareMathOperator{\rk}{rk}
\newcommand\michael[1]{\noindent{\textcolor{magenta}{[MTS: #1]}}}
\newcommand\vincent[1]{\noindent{\textcolor{magenta}{[VPG: #1]}}}
\renewcommand\vincent[1]{}\renewcommand\michael[1]{}
\begin{document}
	\title{Topological Point Cloud Clustering}
	\author{Vincent P. Grande}
	\authornote{\textsmaller{VPG} acknowledges funding by the German Research Council (\textsmaller{DFG}) within Research Training Group 2236 (UnRAVeL)}
	\email{grande@cs.rwth-aachen.de}
	\affiliation{%
		\institution{\textsmaller{RWTH} Aachen University}
		\country{Germany}
	}
	
	\author{Michael T. Schaub}
	\authornote{\textsmaller{MTS} acknowledges partial funding from Ministry of Culture and Science (\textsmaller{MKW}) of the German State of North Rhine-Westphalia ("NRW R\"uckkehrprogramm") and the European Union (ERC, HIGH-HOPeS, 101039827). Views and opinions expressed are however those of the author(s) only and do not necessarily reflect those of the European Union or the European Research Council Executive Agency. Neither the European Union nor the granting authority can be held responsible for them.}
	\email{schaub@cs.rwth-aachen.de}
	\affiliation{%
		\institution{\textsmaller{RWTH} Aachen University}
		\country{Germany}
	}
\keywords{Hodge Laplacian, Topological Data Analysis, Spectral Clustering}
\iftoggle{arxiv}{
\begin{teaserfigure}
		\includegraphics[width=\textwidth]{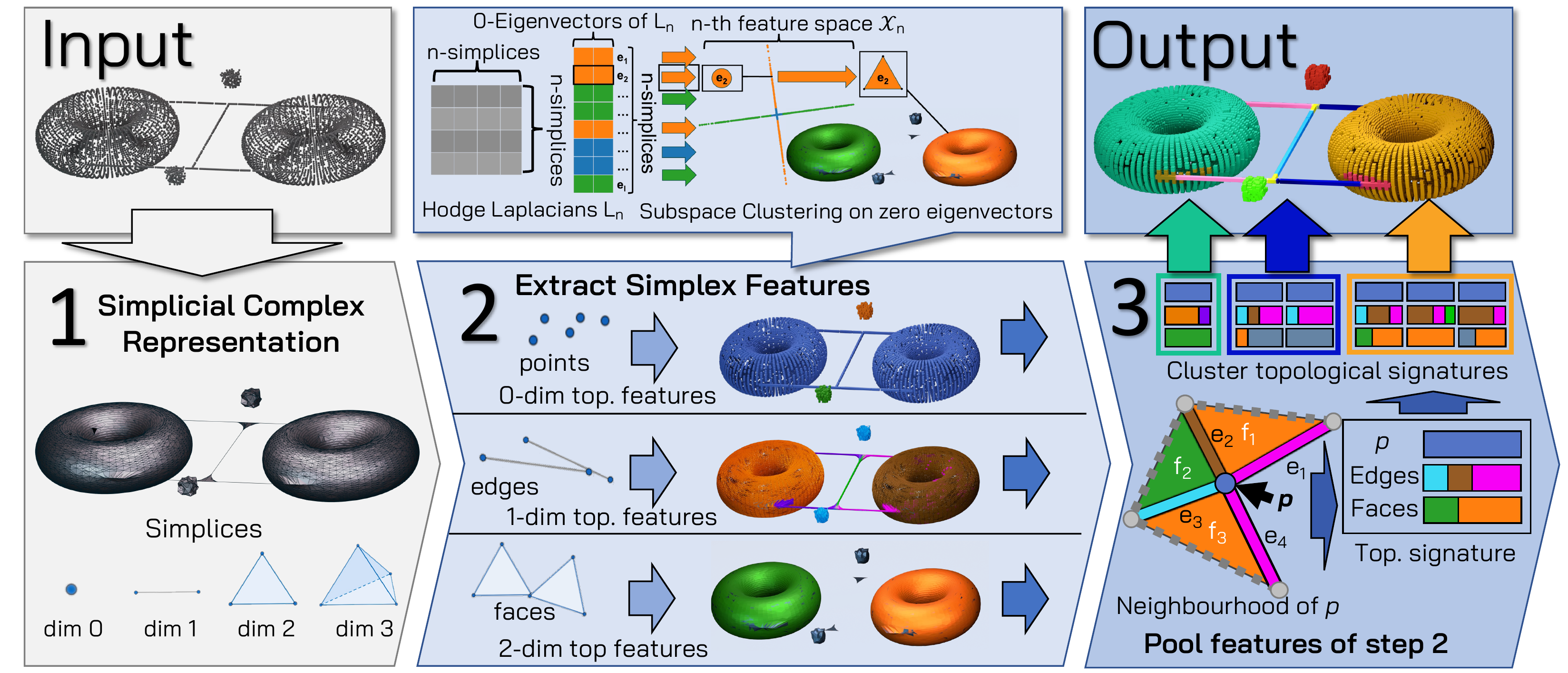}
	\caption{\textbf{Schematic of topological point cloud clustering (\TPCC{})}.
		\textbf{Step 1.} To capture the topological shape of the point cloud a \emph{simplicial complex} is constructed. 
		\textbf{Step 2.} Associated \emph{Hodge-Laplace} operators are constructed separately for each dimension. 
		The method extracts information from the sparse Hodge-Laplace operators by computing their $0$-eigenvectors. 
		The $0$-eigenvectors are indexed by the simplices in the respective dimensions. 
		We use these eigenvectors to embed the simplices into a single feature
		space $\feat_n$ for each dimension of the simplices and perform subspace clustering on these feature spaces. 
		\textbf{Step 3.} For each simplex, we relay the clustering information back to its vertices. 
		Every point is thus equipped with a \emph{topological signature}, aggregating information on topological features over all dimensions. 
		Finally, the original points are clustered using a standard clustering approach on their topological signature.
		}
	\label{fig:fig1}\vspace{0.3cm}
\end{teaserfigure}
}{\begin{figure*}[ht!]
	\begin{center}
		\includegraphics[width=\textwidth]{figs/1diagrammitmichael2.pdf}
		\caption{\textbf{Schematic of topological point cloud clustering (\TPCC{})}.
			\textbf{Step 1.} To capture the topological shape of the point cloud a \emph{simplicial complex} is constructed. 
			\textbf{Step 2.} Associated \emph{Hodge-Laplace} operators are constructed separately for each dimension. 
			The method extracts information from the sparse Hodge-Laplace operators by computing their $0$-eigenvectors. 
			The $0$-eigenvectors are indexed by the simplices in the respective dimensions. 
			We use these eigenvectors to embed the simplices into a single feature
			space $\feat_n$ for each dimension of the simplices and perform subspace clustering on these feature spaces. 
			\textbf{Step 3.} For each simplex, we relay the clustering information back to its vertices. 
			Every point is thus equipped with a \emph{topological signature}, aggregating information on topological features over all dimensions. 
			Finally, the original points are clustered using a standard clustering approach on their topological signature.}
		\label{fig:fig1}
		\vspace{-0.5cm}
	\end{center}
\end{figure*}}
\begin{abstract}
	We present Topological Point Cloud Clustering (\TPCC{}), a new method to cluster points in an arbitrary point cloud based on their contribution to global topological features. 
	\TPCC{} synthesizes desirable features from spectral clustering and topological data analysis and is based on considering the spectral properties of a simplicial complex associated to the considered point cloud.
	As it is based on considering sparse eigenvector computations, \TPCC{} is similarly easy to interpret and implement as spectral clustering.
	However, by focusing not just on a single matrix associated to a graph created from the point cloud data, but on a whole set of Hodge~Laplacians associated to an appropriately constructed simplicial complex, we can leverage a far richer set of topological features to characterize the data points within the point cloud and benefit from the relative robustness of topological techniques against noise.
	We test the performance of \TPCC{} on both synthetic and real-world data and compare it with classical spectral clustering.
\end{abstract}
\iftoggle{arxiv}{
	\maketitle
	\section*{Code}
	The code to reproduce the experiments can be found \href{https://git.rwth-aachen.de/netsci/publication-2023-topological-point-cloud-clustering}{at gitlab}\footnote{\url{https://git.rwth-aachen.de/netsci/publication-2023-topological-point-cloud-clustering}}.
}

\section{Introduction}
A central quest of unsupervised machine learning and pattern recognition is to find meaningful (low-dimensional) structures within a dataset, where there was only apparent chaos before. 
In many cases, a data set consist of a point cloud in a high-dimensional space, in which each data point represents a real-world object or relation. 
Dimensionality reduction and clustering methods are thus often used as a first step towards extracting a more comprehensible description of the data at hand, and can yield meaningful insights into previously hidden connections between the objects.

The paradigm of most classical clustering algorithms assumes that there are only a few ``fundamental types'' within the observed data and every data point can be assigned to one of those types. 
How the notion of type is interpreted varies in different approaches, but in most cases, the data is assumed to be a disjoint union of these types plus noise, and the focus is on identifying an optimal \emph{local} assignment of the points to the respective types (clusters).
For instance, many prototypical clustering algorithms like $k$-means clustering~\cite{Steinhaus:1957} or mixture models like Gaussian mixtures~\cite{Day:1969} aim to group points together that are close according to some local distance measure in $\mathbb{R}^n$.
Other variants, like \textsmaller{DBSCAN}, aim to track dense subsets within the point cloud~\cite{Ester:1996}, and subspace clustering aims to find a collection of low-dimensional linear subspaces according to which the points can be grouped~\cite{Chen:2009}.
On the other hand, quantifying and utilizing the overall shape of the point cloud, i.e., how it is \emph{globally} assembled from the different clusters 
or how to find the best possible cluster types to build up the data is typically not a concern.

In comparison, topological data analysis (\TDA{}), which has gained significant interest in the last decades \cite{Carlsson:2021}
emphasises an opposite perspective.
Here the dataset is effectively interpreted as one complex object, a topological space, whose ``shape'' we try to determine by measuring certain topological features (typically homology) to understand the \emph{global} make-up of the entire point cloud.
Such topological features are virtually omnipresent and are very flexible to describe highly complex shapes.
For instance, in medicine, they can measure the topology of vascular networks and can distinguish between tumorous and healthy cells~\cite{Stolz:2022}.
In public health studies, they have been used to analyse health care delivery network efficiency~\cite{Gebhart:2021},
and in Biochemistry, persistent homology has been used to analyse protein binding behaviour~\cite{Kovacev-Nikolic:2016}.
In Data Science, the Mapper algorithm uses topological features of data sets to produce a low dimensional representation of high dimensional data sets~\cite{Singh:2007}.

One key insight that has driven the success of the ideas of \TDA{} is that insightful higher-order information is often encoded in the topological features of (some amenable representation of) the data.
However, in contrast to classical clustering, the question of how the individual data points contribute to the make-up of the overall topological object is typically not a result of these types of analysis. 
This can render the interpretation of the results difficult, as often the individual data points have a concrete and meaningful (often physical) interpretation and we would thus like to know how these points relate to the overall measured topological object.


The aim of this paper is to combine the advantages of these two perspectives and to establish a synthesis of traditional clustering algorithms with their easily interpretable output and the powerful notion of topological features of \TDA{}. 
Topological Point Cloud Clustering (\TPCC{}) bridges this gap between the local nature of classical clustering and the global features of \TDA{}, by aggregating information gained from multiple levels of a form of generalized spectral clustering on the $k$-simplices. 

\paragraph{Contributions}
We develop a novel topological point cloud clustering method that clusters the points according to what topological features of the point cloud they contribute to.
We prove that the clustering algorithm works on a class of synthetic point clouds with an arbitrary number of topological features across arbitrary dimensions. 
Finally, we verify the accuracy of topological point cloud clustering on a number of synthetic and real-world data and compare it with other approaches on data sets from the literature.

\paragraph{Organisation of the paper}
\label{sec:Organisation}
We introduce necessary topological notions in~\cref{sec:topology}. 
In \cref{sec:MainIdea}, we describe the main idea of topological point cloud clustering. 
A theoretical result on the accuracy of the algorithm on a class of synthetic point clouds is then presented in \ref{sec:theory}.
Finally, we show the distinguishing power of topological point cloud clustering on synthetic data, protein data and physically inspired real-world data in \cref{sec:experiments}. 
In particular, we compare the results of our algorithms with other clustering methods and study the robustness of \TPCC{} against noise on synthetic data.
Certain technical aspects of our algorithm and our running example are explained in more detail in \Cref{sec:Algorithm} and \Cref{sec:runningexamplelong}.

\paragraph{Related Work}
Our work builds upon several ideas that have been promoted in the literature.
In particular, \TPCC{} may be seen as a generalization of spectral clustering~\cite{vonLuxburg:2007}.
Spectral clustering starts with the construction of a graph from an observed point cloud, by identifying each data point with a vertex and connecting close-by points with an edge.
Vertices are then clustered according to their spectral embedding, i.e., the dominant eigenvectors of the graph representation considered (typically in terms of an associated Laplacian matrix).
However, these eigenvectors used by spectral clustering are merely related to connectivity properties (0-homology), and the produced clustering is thus restricted in terms of the topological features it considered.
Topological Mode Analysis \cite{Chazal2013} clusters point clouds using persistent homology. However, because only $0$-dimensional homology is considered the approach cannot cluster according to higher-order topological features like holes, loops and voids.

Our work does not just build a graph from the point cloud data but employs a simplicial complex to describe the observed point cloud (similar to how it is done in persistent homology) and embeds and clusters all $k$-simplices into the $0$-eigenvector space of the $k$-th Hodge Laplacian.
Related ideas of using embeddings based on the Hodge~Laplacian can be found in~\cite{schaub2020random,Chen2021,Ebli2019}:
The idea of defining a harmonic embedding to extract meaningful information about a simplicial complex has been discussed in the context of trajectory classification~\cite{schaub2020random, frantzen2021outlier}.
In \cite{Chen2021}, the authors study how this embedding is affected by constructing more complex manifolds from simpler building blocks.
However, they do not study how to decompose the underlying points based on this embedding.
In \cite{Ebli2019}, the authors develop a notion of harmonic clustering on the simplices of a simplicial complex. 
We use an extended version of this clustering as one step in \TPCC{}.
\cite{Krishnagopal2021} have as well considered harmonic clustering of simplices but only use it to detect large numbers of communities in small simplicial complexes.
In \cite{Perea2020}, the author uses a smoothed version of cohomology generators to quantify homology flows and build circular coordinates.
From a certain point of view, this is surprisingly similar to considering zero eigenvectors of Hodge Laplace operators.
Some related ideas to our work are also investigated in~\cite{Stolz2020}, where the authors provide a tool for detecting anomalous points of intersecting manifolds. 
As we will see, our algorithm is able to detect not only these points but can provide additional information about all remaining points as well.
There has been some work on surface and manifold detection in point clouds \cite{Martin2011, Hoppe:1992}.
In contrast to \TPCC{}, these algorithms don't provide any clustering or additional information on the points and are confined to manifold-like data, which is usually assumed to be a $2$-dimensional surface in $3$-dimensional space.
Approaches utilising tangent bundle constructions assume that the data corresponds to intersecting manifolds and that the desired clusters are represented by individual manifolds \cite{Wang2011, Gong2012, Tinarrage2023}. However, this may not be the case in real-world applications. \TPCC{} does not make such a restrictive assumption and is thus more widely applicable

The Hodge~Laplacian has also featured in a number of works from graph signal processing and geometric deep learning.
A homology-aware simplicial neural network is constructed in \cite{Keros2022}, extending previous models \cite{Roddenberry:2021,Bunch2020} on simplices of dimension two~\cite{Ebli2020,Bodnar2021}).
However, these approaches focus on a scenario where the higher-order simplices have some real-world meaning, e.g., 1-simplices can be identified by streets, neural links, or pairs of co-authors.
In contrast here our primary focus is on a scenario in which we are only given a point cloud to start with and thus only the points have a real-world meaning, whereas the higher dimensional features are added via some artificial simplicial complex simply to extract further information about the shape of the data. This is the case in most standard application scenarios.

\section{A Topological Notion of Features}
\label{sec:topology}
A main goal of topology is to capture the essence of spaces.
Topological tools try to describe globally meaningful features of spaces that are indifferent to local perturbations and deformations. 
This indifference of topological features to local perturbations can be a crucial asset when analysing large-scale datasets, which are often high-dimensional and noisy.
To leverage these ideas, we need to explain what we mean by \emph{topological features} throughout the paper.
A key assumption in this context is that high dimensional data sets may be seen as samplings from topological spaces --- most of the time, even low-dimensional manifolds \cite{Fefferman:2016}.
Rather than providing a complete technical account, in the following, we try to emphasize the relevant conceptual ideas and refer the interested reader to \cite{tomDieck:2008,Bredon:1993,Hatcher:2002} for further details.

\paragraph{Simplicial Complexes}
The prototypical topological space is a subset of $\R^n$ and hence continuous. 
Storing the infinite number of points in such a space individually is impossible.
On the other hand, our observed point cloud will always be discrete and non-connected. 
\emph{Simplicial complexes} (\tSC{}) bridge this gap between the continuous spaces of topology, and the discrete nature of our point cloud. 
They offer a way to build topological spaces from easy-to-define building blocks. 
Indeed, a well-known theorem in topology \cite{Quillen:1967} asserts that any topological space with the homotopy type of a \CW complex can be approximated by a simplicial complex.

\begin{definition}[Abstract simplicial complex]
	An abstract simplicial complex $\SC$ consists of a set of vertices $X$ and a set of finite non-empty subsets of $X$, called simplices $S$, such that \textbf{(i)} $S$ is closed under taking non-empty subsets and \textbf{(ii)} the union over all simplices $\smash{\bigcup_{\sigma\in S} \sigma}$ is $X$.
	For simplicity, we often identify $\SC$ with its set of simplices and use $\SC_n$ to denote the subset of simplicies with $n+1$ elements.
\end{definition}

Intuitively, in order to build a simplicial complex $\SC$, we first start with a set of vertices $V$. 
These are called the $0$-simplices. 
We can then add building blocks of increasing dimension. 
The $1$-simplices represent edges between $2$ vertices, the $2$-simplices are triangles between $3$ vertices that are already connected by edges. 
An $n$-simplex resembles an $n$-dimensional polyhedra.
An $n$-simplex $\sigma_n$ connects $(n+1)$ vertices, given that they are already connected by all possible $(n-1)$-simplices. 
These $(n-1)$-simplices are then called the faces of $\sigma_n$.
We call two $(n-1)$-simplices \emph{upper-adjacent} if they are faces of the same $n$-simplex.
Correspondingly, we call two $n$-simplices \emph{lower-adjacent} if they share a common $(n-1)$-simplex as a face.

\paragraph{Vietoris--Rips complex}
Building the Vietoris--Rips complex is a method of turning a point cloud into a simplicial complex, approximating the topological features of the space it was sampled from. 
The Vietoris--Rips complex takes $2$ arguments as input: The point cloud $X$ and a minimal distance $\varepsilon$. 
It then builds a simplicial complex $\SC$ by taking $X$ as the set of vertices (and thus of $0$-simplices) of $\SC$.
Between every two distinct vertices of distance $d<\epsilon$ it adds an edge, i.e.~an $1$-simplex. 
Inductively, it then adds an $n$-simplex for each set of $(n+1)$ vertices in $X$ with pair-wise distance smaller than $\varepsilon$. In practice, one often restricts this process to simplices of dimension $n\le N$ for some finite number $N$.

\paragraph{Boundary matrices and the Hodge~Laplacians}

All topological information of a simplicial complex $\SC$ can be encoded in its \emph{boundary matrices} $\bound_n$. 
The rows of $\bound_n$ are indexed by the $n$-simplices of $\SC$, the columns are indexed by the $(n+1)$-simplices. 
\begin{definition}
	Let $\SC=(S,X)$ be a simplicial complex and $\preceq$ a total order on its set of vertices $X$. For $n\ge i$, $n\ge 1$ we define the $i$-th face map $f^n_i\colon\SC_n\rightarrow \SC_{n-1}$ by 
	\begin{align*}
		f^n_i&\colon \{x_0,x_1,\dots,x_n\}\mapsto \{x_0,x_1,\dots,\widehat{x}_i,\dots,x_n\}
	\end{align*}
	where we have that $x_0\preceq x_1\preceq\dots\preceq x_n$ and $\widehat{x}_i$ denotes the omission of $x_i$. Then we define the $n$-th boundary operator $\bound_n\colon \R[\SC_{n+1}]\rightarrow\R[\SC_{n}]$ by
	\[
		\bound_n\colon \sigma\mapsto \sum_{i=0}^{n+1}(-1)^if^{n+1}_i(\sigma).
	\]
	We identify $\bound_n$ with its matrix representation in lexicographic ordering of the simplex basis.
\end{definition}

Note that with this definition, $\bound_0$ is simply the familiar vertex-edge-incidence matrix of the associated graph built from the $0$- and $1$-simplices of~$\SC$.
\begin{definition}
	The $n$-th \emph{Hodge~Laplacian} $L_n$ of $\SC$ is a square matrix indexed by the $n$-simplices of $\SC$:
	\begin{equation}
		\smash{L_n\coloneqq\bound_{n-1}^\top \bound_{n-1}+\bound_n\bound_n^\top }
	\end{equation}
	where we take $\bound_{-1}$ to be the empty matrix. 
\end{definition}
The key insight about the $\bound_n$ is the following lemma:
\begin{lemma}
	For a simplicial complex $\SC$ with boundary matrices $\bound_i$ we have that
	$\smash{\bound_n\circ\bound_{n+1}=0}$
	for $n\ge 0$.
\end{lemma}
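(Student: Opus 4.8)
The plan is to prove the matrix identity $\bound_n\circ\bound_{n+1}=0$ by viewing both sides as the linear map $\R[\SC_{n+2}]\to\R[\SC_n]$ given by the composition of the two boundary maps, and checking that this composition kills every basis element, i.e.\ every $(n{+}2)$-simplex $\tau=\{x_0\prec x_1\prec\dots\prec x_{n+2}\}$ (the total order $\preceq$ restricts to a strict order on the vertices of any simplex). Since $\SC$ is closed under taking non-empty subsets, every iterated face of $\tau$ is again a simplex of $\SC$, so all the maps below are well defined. Applying the definition of the boundary operator twice gives
\[
	\bound_n\bigl(\bound_{n+1}(\tau)\bigr)=\sum_{j=0}^{n+2}\sum_{i=0}^{n+1}(-1)^{i+j}\,f^{n+1}_i\!\bigl(f^{n+2}_j(\tau)\bigr).
\]

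The key combinatorial fact I would establish first, directly from the formula for the face maps, is the commutation rule: for $i<j$,
\[
	f^{n+1}_i\circ f^{n+2}_j=f^{n+1}_{j-1}\circ f^{n+2}_i,
\]
because deleting the vertex in position $j$ and then the vertex in position $i<j$ removes the same pair $\{x_i,x_j\}$ as deleting position $i$ first and then deleting $x_j$, which has meanwhile shifted down to position $j-1$. With this in hand I would split the double sum into the block $\{i<j\}$ and the block $\{i\ge j\}$. In the second block I substitute $p=j$, $q=i+1$, so that the pair ranges exactly over $0\le p<q\le n+2$, the sign becomes $(-1)^{(q-1)+p}=-(-1)^{p+q}$, and the commutation rule rewrites $f^{n+1}_{q-1}\circ f^{n+2}_p$ as $f^{n+1}_p\circ f^{n+2}_q$. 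Thus the second block equals $-\sum_{p<q}(-1)^{p+q}f^{n+1}_p(f^{n+2}_q(\tau))$, which is precisely the negative of the first block, and the whole expression vanishes. Since $\tau$ was an arbitrary basis element, $\bound_n\circ\bound_{n+1}=0$; the case $n=0$ needs no separate treatment, as the argument never used $n\ge 1$ and $\bound_{-1}$ does not appear.

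There is no substantive obstacle here --- this is the classical ``boundary of a boundary is zero''. The only point requiring care is the bookkeeping: getting the index shift and the sign right in the commutation identity, and verifying that the reindexed $\{i\ge j\}$ block lands exactly on the same index set $\{0\le p<q\le n+2\}$ as the $\{i<j\}$ block so that the two cancel term by term rather than merely in aggregate.
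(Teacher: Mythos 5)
Your proof is correct and complete. The paper itself states this lemma without proof (it is the classical fact that the boundary of a boundary vanishes), so there is no in-paper argument to compare against; your argument is exactly the standard one. The key points are all handled properly: the commutation identity $f^{n+1}_i\circ f^{n+2}_j=f^{n+1}_{j-1}\circ f^{n+2}_i$ for $i<j$ is verified correctly (deleting $x_j$ then $x_i$, versus deleting $x_i$ first and then $x_j$, which has shifted to position $j-1$), the reindexing $(p,q)=(j,i+1)$ of the block $\{i\ge j\}$ does land exactly on $\{0\le p<q\le n+2\}$ with the sign $(-1)^{(q-1)+p}=-(-1)^{p+q}$, so the cancellation is term by term, and closure of $\SC$ under non-empty subsets guarantees every iterated face is again a simplex, so $\bound_n\circ\bound_{n+1}=0$ for all $n\ge 0$ as claimed.
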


\paragraph{Topological features: Homology and Betti numbers}
One of the main topological concepts is \emph{homology}. 
The $k$-th \emph{homology module} $H_k(X)$ of a space $X$ encodes the presence and behaviour of $k$-dimensional loops, enclosing generalised $(k+1)$-dimensional voids/cavities. 
The $k$-th \emph{Betti number} $\betti_k(X)$ of $X$ denotes the rank $\rk H_k(X)$ of the corresponding homology module. 
The $0$-th Betti number $\betti_0(X)$ is the number of connected components of $X$, $\betti_1(X)$ counts the number of loops and $\betti_2(X)$ counts how many $3$-dimensional cavities with $2$-dimensional borders are enclosed in $X$, and so on.

The following connection between the homology of an \tSC{} and its Hodge Laplacian will prove essential to us:
\begin{lemma}[\cite{Eckmann:1944,Friedman:1998}]
	For a simplicial complex $\SC$, let $L_n$ be the Hodge Laplacians and $\betti_n$ be the Betti numbers of $\SC$. Then we have that $
	\rk \ker L_n=\betti_n.$
\end{lemma}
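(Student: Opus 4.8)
The plan is to realise $\ker L_n$ as the space of \emph{harmonic $n$-chains} and then to identify this space with the $n$-th homology module of $\SC$, whose rank is by definition $\betti_n$; the whole argument is finite-dimensional linear algebra resting on the preceding lemma $\bound_{n-1}\bound_n = 0$.

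First I would show $\ker L_n = \ker\bound_{n-1}\cap\ker\bound_n^\top$. Since $L_n$ is symmetric and positive semidefinite, for every $x\in\R[\SC_n]$ one has $x^\top L_n x = \|\bound_{n-1}x\|^2 + \|\bound_n^\top x\|^2$, so $L_n x = 0$ forces both terms to vanish; the reverse inclusion is immediate. (For $n=0$, where $\bound_{-1}$ is the empty matrix, this reads $\ker L_0 = \ker\bound_0^\top$.)

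Next I would establish the orthogonal Hodge decomposition $\R[\SC_n] = \Ima\bound_n \oplus (\ker\bound_{n-1}\cap\ker\bound_n^\top) \oplus \Ima\bound_{n-1}^\top$. The standard adjoint facts $(\Ima A)^\perp = \ker A^\top$ together with rank--nullity give the orthogonal splittings $\R[\SC_n] = \Ima\bound_n\oplus\ker\bound_n^\top$ and $\R[\SC_n] = \ker\bound_{n-1}\oplus\Ima\bound_{n-1}^\top$. The extra ingredient is $\bound_{n-1}\bound_n = 0$, i.e.\ $\Ima\bound_n\subseteq\ker\bound_{n-1}$: inside the subspace $\ker\bound_{n-1}$ the orthogonal complement of $\Ima\bound_n$ is exactly $\ker\bound_{n-1}\cap(\Ima\bound_n)^\perp = \ker\bound_{n-1}\cap\ker\bound_n^\top$, hence $\ker\bound_{n-1} = \Ima\bound_n\oplus(\ker\bound_{n-1}\cap\ker\bound_n^\top)$.

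Combining the two steps and taking dimensions then finishes the proof: $\rk\ker L_n = \dim(\ker\bound_{n-1}\cap\ker\bound_n^\top) = \dim\ker\bound_{n-1} - \dim\Ima\bound_n = \dim(\ker\bound_{n-1}/\Ima\bound_n) = \betti_n$, using that with the paper's convention $\bound_{n-1}$ plays the role of the classical $n$-th boundary map and $\bound_n$ that of the $(n+1)$-st, so $\ker\bound_{n-1}/\Ima\bound_n$ is precisely the $n$-th homology module. I do not anticipate a genuine obstacle here: the mathematical content is the orthogonal Hodge decomposition, which is routine once $\bound_{n-1}\bound_n = 0$ is available; the only points needing care are the index bookkeeping and the degenerate cases $n=0$ and $n$ at the top dimension of $\SC$, where $\bound_{-1}$ respectively $\bound_n$ is the empty/zero matrix and one summand of the decomposition disappears.
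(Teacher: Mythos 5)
Your proposal is correct: the identity $\ker L_n=\ker\bound_{n-1}\cap\ker\bound_n^\top$ via the quadratic form, the orthogonal Hodge decomposition using $\bound_{n-1}\bound_n=0$, and the dimension count identifying $\ker\bound_{n-1}/\Ima\bound_n$ with the $n$-th homology (with the paper's shifted indexing of $\bound$) is exactly the standard discrete Hodge-theoretic argument. The paper itself offers no proof of this lemma and simply cites Eckmann and Friedman, and your argument is essentially the proof found in those references, so there is nothing to reconcile beyond noting that your handling of the degenerate cases ($n=0$ and top dimension) is consistent with the paper's convention that $\bound_{-1}$ is empty.
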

The dimension of the kernel of the Hodge~Laplacian is equal to the number of orthogonal zero eigenvectors of $L_n$ over $\R$. 
Hence the Hodge~Laplacian provides a gateway for accessing topological features by computing eigenvectors.

\section{TPCC: Algorithm and Main Ideas}
\begin{algorithm}[tb]
	\caption{Topological Point Cloud Clustering (\TPCC{})}
	\label{alg:TPCC}
	\begin{algorithmic}
		\STATE {\bfseries Input:} Point cloud $X$, maximum dimension $d$
		\STATE Pick $\varepsilon$ and construct \textsmaller{VR}~complex $\SC$ of $X$
		\STATE Construct Hodge Laplacians $L_0,\dots,L_d$ of $\SC$
		\FOR{$i=0$ {\bfseries to} $d$}
		\STATE Compute basis $v_0^i,\dots, v_{\betti_i}^i$ of $0$-eigenvectors of $L_i$
		\STATE Subspace Clustering on rows of $\left[v_0^i,\dots, v_{\betti_i}^i\right]$
		\STATE Assign clusters to corresponding $i$-simplices of $\SC$
		\FOR{$x\in X$}
		\STATE (Top. signature of $x$:) Collect cluster information of $i$-simplices $\sigma_i$ with $x\in \sigma_i$
		\ENDFOR
		\ENDFOR
		\STATE Cluster $X$ according to topological signatures
		\STATE \textbf{Output:} Labels of $x\in X$
	\end{algorithmic}
\end{algorithm}
\label{sec:MainIdea}
In this section, we will describe Topological Point Cloud Clustering and its main ideas. A pseudocode version can be found in \Cref{alg:TPCC}.
\paragraph{Running example}
To illustrate our approach, we use the example displayed in \Cref{fig:fig1} consisting of two $4$-dimensional tori, depicted here in their projection to 3d space.
We connected the tori with two lines, which are again connected by a line.
Additionally, the point cloud includes two separate connected components without higher dimensional topological features.
Our point cloud has thus $11$ topological features across $3$ dimensions.
In terms of Betti numbers, we have $B_0=3$, $B_1=6$, and $B_2=2$.
For an in-depth discussion of the topology and construction of the running example, see \Cref{sec:runningexamplelong}.

\paragraph{Step 1: Approximating the space}
To characterize our point cloud in terms of topological information, we suggest using the framework of simplicial complexes and the Vietoris--Rips Complex due to their straightforward definitions. 
The goal of this paper is to show that even with this naive approach of constructing a simplicial complex, a topologically meaningful clustering can be achieved. 
However, we note that \TPCC{} is agnostic towards the method the simplicial complex was constructed.
In low dimensions, the $\alpha$-complex provides a computationally efficient alternative with a lower number of simplices.
Complexes built using \textsmaller{DTM}-based filtrations are another alternative more robust to outliers \cite{Anai2020}.

The general assumption is that the points of the point cloud are in some general sense sampled, potentially with some additional noise, from a geometrical space.
Now we would like to retrieve the topology of this original geometrical space from the information provided via the sampled points.
Hence, following common ideas within \TDA{}, we construct a computationally accessible topological space in terms of a simplicial complex on top of the point cloud approximating the ground truth space.
We denote the simplicial complex associated to our toy point cloud by~ $\SC$.
We note that the \TPCC{} framework works both with simplicial as well as with cellular complexes.
For simplicity however, we chose to stick with simplicial complexes throughout this paper.

\paragraph{Step 2A: Extracting topological features}
Having built the simplicial complex $\SC$, we need to extract its topological features. 
However, standard measures from topological data analysis only provide global topological features: 
For instance, Betti numbers are global features of a space, and persistence landscapes measure all features at once \cite{Bubenik2015}. 
In contrast, we are interested in how individual simplices and points are related to the topological features of the space. 
It is possible to extract a homology generator for a homology class in persistent homology \cite{Obayashi:2018}.
This approach is however not suitable for us, because the choice of a generator is arbitrary, and only the contribution of a small number of simplices can be considered. 

\TPCC{} utilises a connection between the simplicial Hodge-Laplace operators and the topology of the underlying \tSC{}. The dimension of the $0$-space of the $k$-th Hodge~Laplacian $L_k$ is equal to the $k$-th Betti number $\betti_k$  \cite{Eckmann:1944,Friedman:1998}.
Furthermore, the rows and columns of the Hodge~Laplacian $L_k$ are indexed by the $k$-simplices of $\SC$ and describe how simplices relate to each other, and in particular how they contribute to homology in terms of the null space of the $L_k$.

Let us now consider a concrete loop/boundary $\feat$ of an $(k+1)$-dimensional void.
We can then pick a collection $S$ of edges/$k$-simplices that represents this loop/boundary.
By assigning each simplex in $S$ the entry $\pm 1$ based on the orientation of the simplex, and every other simplex the entry $0$, we obtain a corresponding vector $e_S$.
The Hodge Laplace operator $L_k=\bound_{k-1}^\top \bound_{k-1}+\bound_k\bound_k^\top $ consists of two parts.
The kernel of the down-part, $\bound_{k-1}^\top \bound_{k-1}$, is spanned by representations of the boundaries of $(k+1)$-dimensional voids.
Hence, $e_S$ lies in this kernel: $\bound_{k-1}^\top \bound_{k-1}e_S=0$.
The kernel of the up-part of the Hodge Laplacian, $\bound_{k}\bound_{k}^\top $, is spanned by vectors that represent smooth flows along the $k$-simplices.
Thus by smoothing along the $k$-simplices one can turn $e_S$ into an eigenvector $\smash{\widehat{e}_S}$ of the entire Hodge Laplace operator $L_k$:
\begin{equation}
	L_k\widehat{e}_S=\bound_{k-1}^\top \bound_{k-1}\widehat{e}_S + \bound_{k}\bound_{k}^\top \widehat{e}_S=0.
\end{equation}
We call $\smash{\charEV{\feat}\coloneqq\widehat{e}_S}$ the \emph{characteristic eigenvector} associated to the loop/void $\feat$.

For simplicity, let us first consider the case where the $k$-th Betti number $B_k(\SC)$ is $1$. 
Then the zero-eigenvector $v_0$ of $L_k$ has one entry for every $k$-simplex and is the characteristic eigenvector $\charEV{\feat}$ for the single topological feature $\feat$ in dimension $k$. 
The entries of $v_0$ measure the contribution of the corresponding simplices to $\feat$. 
Intuitively, we can visualise the homology \enquote{flowing} through the simplices of the simplicial complex.
The entries of the eigenvector correspond to the intensity of the flow in the different $k$-simplices.
Because of the way we constructed $\charEV{\feat}$, the homology flow is then concentrated along the $k$-dimensional boundary of a hole/void in the space.
In the $1$-dimensional setting, this corresponds to harmonic flows along edges around the holes of an {\tSC{} \cite{Schaub:2021}.
	The case for the Betti number larger one $B_k>1$ will be discussed in more detail in the following paragraph.
		
	\paragraph{Step 2B: Clustering the $n$-simplices}
	Extending ideas from~\cite{Ebli2019,schaub2020random} we use the obtained coordinates for each simplex to cluster the simplices.
	In the case where $L_k$ has a single $0$-eigenvalue, we can easily cluster the simplices by simply looking at the entries of the $0$-eigenvector $e$:
	We can ignore the sign of the entry $e_\sigma$ of $e$ corresponding to a simplex $\sigma$ because this only reflects whether the arbitrarily chosen orientation of $\sigma$ aligns with the direction of the \enquote{homology flow}.
	Then, we assign all simplices $\sigma$ with absolute value of $e_\sigma$ above a certain threshold $|e_\sigma|>\varepsilon$ to the cluster of homologically significant simplices.
	The remaining simplices are assigned to a separate cluster.
	
	In the case of multiple boundaries of voids of the same dimension, i.e.~$B_k>1$, each boundary $\feat$ again corresponds to a \enquote{homology flow} with an associated characteristic eigenvector $\charEV{\feat_i}$ of $L_k$.
	The $\charEV{\feat_i}$ span the zero-eigenspace $E_k$ of $L_k$.
	However, an eigenvector solver will yield an arbitrary orthonormal basis $e_1,\dots,e_{B_k}$ of $E_k$ which is only unique up to unitary transformations.
	For a $k$-simplex $\sigma\in \SC_k$, let  $e_i(\sigma)$ denote the coordinate associated to $\sigma$ of the $i$-th basis vector $e_i$ of $E_k$ obtained by the eigenvector solver.  Now we denote by $\iota\colon \SC_k\rightarrow \R^{\betti_k}$,
	\[
	\iota\colon \sigma\mapsto \left(e_1(\sigma),e_2(\sigma),\dots,e_{B_k}(\sigma) \right)\in \R^{B_k}
	\]
	the embedding of the simplices into the $k$-th \emph{feature space} $\fspace_k\coloneqq\R^{\betti_k}$.
	Note that because we could have started with any orthonormal basis of $E_k$ the feature space is only defined up to arbitrary unitary transformations.
	The points of the feature space $\fspace_k$ represent different linear combinations of the basis vectors of the zero eigenspace of $L_k$. They also represent linear combinations of the $\charEV{\feat_i}$, and hence intuitively of the topological features.
	
	In the most simple case, the $\charEV{\feat_i}$ are orthogonal to each other and thus have disjoint support.
	Then they represent orthogonal linear combinations of the original basis of $E_k$ in the feature space $\fspace_k$.
	Hence the \enquote{natural} $\charEV{\feat_i}$-basis can be recovered by subspace clustering the $k$-simplices on the feature space $\fspace_k$ as depicted in the top of \Cref{fig:fig1}.
	For computational reasons, we subsample the simplices used for the subspace clustering.
	The remaining simplicies will then be classified using a $k$-nearest neighbour classifier on the feature space $\fspace_k$. 
	See \Cref{sec:linearcombinations} and \Cref{sec:multidimensional} for a discussion of more complicated special cases.
	\paragraph{Step 3A: Aggregating the information to the point level}
	
	\begin{figure}[tb!]
		\vskip 0.2in
		\begin{center}
			\centerline{\includegraphics[width=\columnwidth]{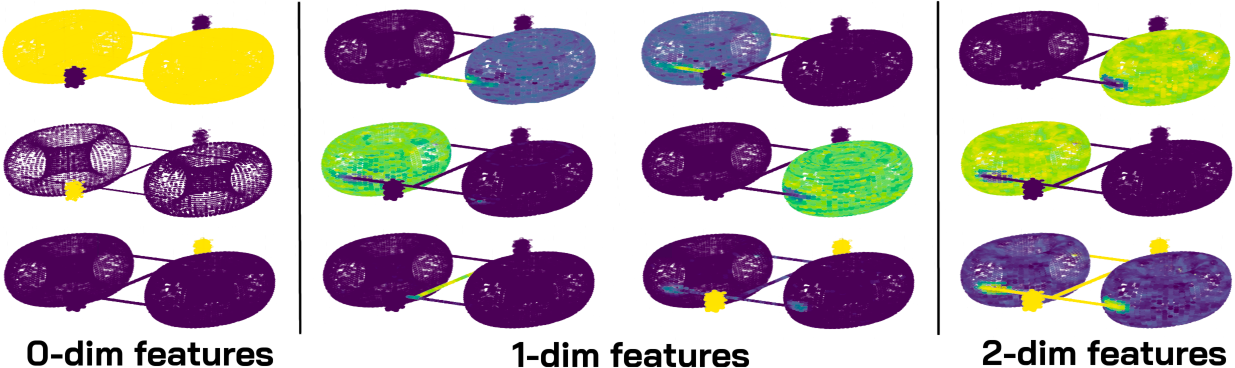}}
			\caption{
				Above we depict the heatmaps for all $16$ distinct combinations of topological features encoded in the topological signature across $3$ dimensions of our toy example. 
				Note that some of the features are redundant, as 
				both edges and faces can measure membership of a torus.}
			\label{fig:topologicalFeatures}
		\end{center}
		\iftoggle{arxiv}{}{\vskip -0.2in}
	\end{figure}
	Finally, we can try to relate the information collected so far back to the points.
	For every point $x$ and every dimension $d$, we aggregate the cluster ids of the $d$-simplices which contain $x$.
	We call the collected information the \emph{topological signature} of $p$. 
	\begin{definition}[Topological Signature]
		\label{def:topsignature}
		Let $X$ be a point cloud with associated simplicial complex $\SC$. For a simplex $\sigma\in \SC$, we denote its cluster assignments from the previous step of \TPCC{} by $C(\sigma)$. Then, the \emph{topological signature} $\tau(x)$ of a point $x\in X$ is the multi-set
		\begin{equation*}
			\tau(x)\coloneqq \{\!\{C(\sigma):\sigma \in \SC, x\in \sigma\}\!\}.
		\end{equation*}
	\end{definition}
	After normalising for each $i$ by the number of $i$-simplices containing the point, topologically similar points will have a similar topological signature. 
	\Cref{fig:fig1}, Step 3 illustrates how the topological signature is calculated.
	In \Cref{fig:topologicalFeatures} we show how the different features of the topological signature highlight topologically different areas of the point cloud.
	Interestingly, we can even retrieve information on the gluing points between two topologically different parts.
	In \Cref{fig:finalclustering}, the \enquote{gluing points} between the tori and the lines receive their own cluster.
	This is because roughly half of the simplices adjacent to the gluing points receive their topological clustering information from the torus and the other half from the adjacent lines.
	Hence the gluing points are characterised by a mixture of different topological signatures.
	
	\paragraph{Step 3B: Computing the final clustering}
	\begin{figure}[tb!]
		\vskip 0.2in
		\begin{center}
			\centerline{\includegraphics[width=\columnwidth]{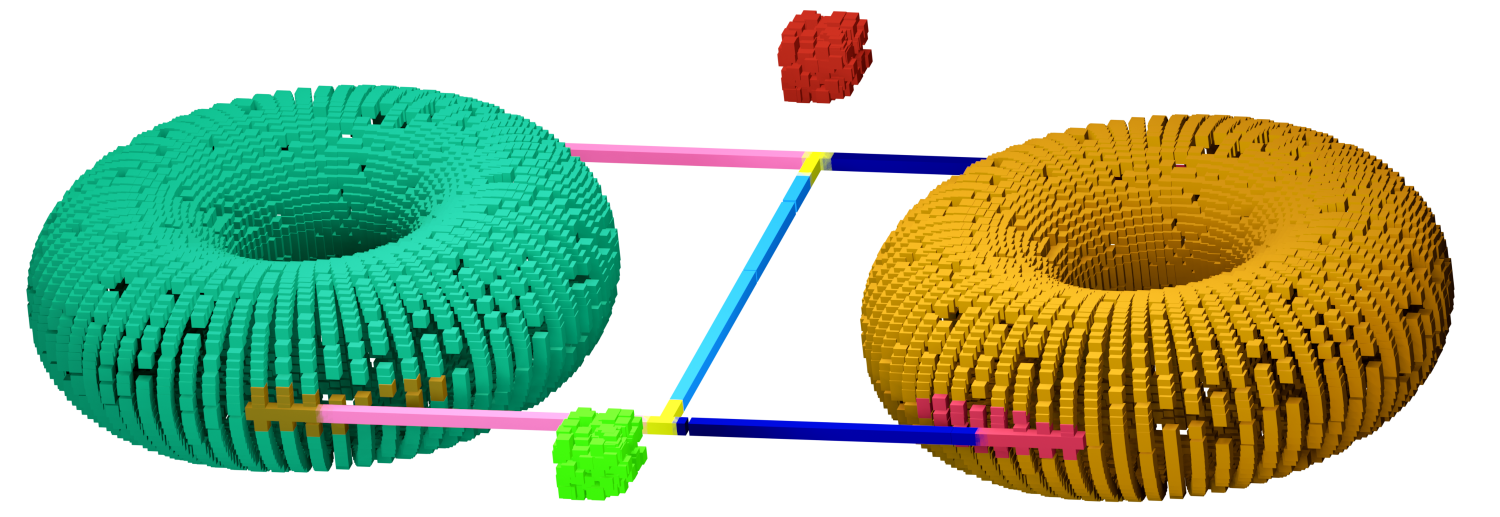}}
			\caption{The final clustering obtained with \TPCC{}. There are $10$ clusters in total. Two clusters identify the two tori (turquoise and ochre), two disconnected cubes (red and lime), dark blue and salmon for the connecting lines of the tori to the middle, azure for the middle line, yellow for the intersection of the lines, and fuchsia and brown for the gluing points of the points to the tori. 
				Note that there are virtually no outliers.
			}
			\label{fig:finalclustering}
		\end{center}
		\vskip -0.2in
	\end{figure}
	If we apply $k$-means or spectral clustering to a normalised form of the topological signatures of the points of our toy example, we arrive at the clustering of \Cref{fig:finalclustering}.
	
	In comparison to standard clustering methods, \TPCC{} can assign the same cluster to similar sets of points consisting of multiple connected components if they share the same topological features. 
	In \Cref{fig:finalclustering}, the two dark blue lines are assigned to the same cluster, because they both lie on the same loop and have no additional topological feature.
	This highlights the ability of \TPCC{} to take higher-dimensional information into consideration,exceeding the results obtainable by proximity-based information.
	
	\paragraph{Choice of parameters}
	\TPCC{} needs two main parameters, $\varepsilon$ and $d$.
	For the choice of the maximum homology degree $d$ to be considered there are three heuristics listed in decreasing importance: 
	
	\begin{enumerate}[I.]
	\item When working with real-world data, we usually know which kind of topological features we are interested in, which will then determine $d$.
	E.g., if we are interested in the loops of protein chains, we only need $1$-dimensional homology and thus choose $d=1$.
	When interested in voids and cavities in 3d tissue data, we need $2$-dimensional homology and thus choose $d=2$, and so on.
	\item There are no closed $n$-dimensional submanifolds of $\R^n$. 
	This means that if the point cloud lives in an ambient space of low dimension $n$, the maximum homological features of interest will live in dimension $n-1$ and hence we can choose $d=n-1$.
	\item In practice, data sets rarely have non-vanishing highly persistent homology in degree above $2$ and considering the dimensions $0$--$2$ usually suffices.
	Otherwise, one can calculate persistent homology up to the maximum computationally feasible degree to identify dimensions with sufficiently persistent homology classes, and then take $d$ as the maximum of these dimensions.
	\end{enumerate}

	Picking the correct value of $\varepsilon$ means choosing the correct scale.
	For the experiments in \Cref{fig:Robustness}, we have implemented a heuristic which computes the persistence diagram of the point cloud, and then picks the $\varepsilon$ maximizing the number of topological features with high persistence and minimizing the number of features with low persistence for this value.
	As can be seen, this method performs comparatively well for considerable noise.
	\paragraph{Technical considerations: Linear combinations of features}
	\label{sec:linearcombinations}
	\begin{figure}[tb!]
		\begin{center}
			\centerline{\includegraphics[width=\columnwidth]{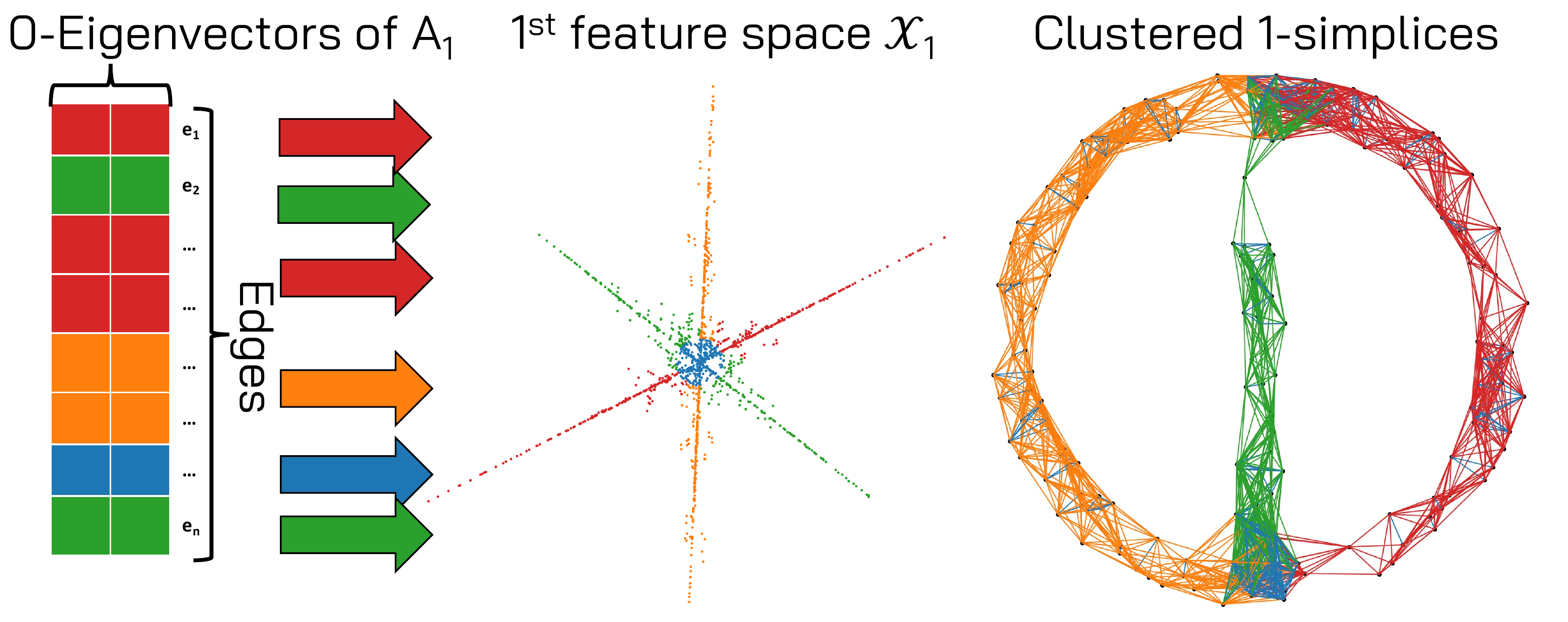}}
			\caption{The circle is divided into two parts by a vertical line. This gives the corresponding \tSC{} two generating loops in dimension $1$, corresponding to a $2$-dimensional $0$-eigenspace of the Hodge~Laplacian $L_1$ and a $2$-dimensional 1\textsuperscript{st} feature space $\fspace_1$. However, now there are three linear subspaces corresponding to linear combinations of the two generating loops. \TPCC{} is able to detect three different clusters of topologically significant edges.}
			\label{fig:SharedBoundary}
		\end{center}
		\iftoggle{arxiv}{}{\vskip -0.3in}
	\end{figure}
	In practice, topological features of the same dimension are not always separated in space.
	A bubble of soap may consist of two individual compartments divided by a thin layer of soap. 
	This middle layer then contributes to the boundaries of the two voids, i.e.~to two topological features of dimension $2$.
	How is this reflected in the $\charEV{\feat_i}$?
	
	This time, the characteristic eigenvectors $\charEV{\feat_i}$ corresponding to boundaries $\feat_i$ of voids of the same dimension are not orthogonal anymore.
	The supports of the $\charEV{\feat_i}$ overlap in the same simplices the corresponding boundaries $\feat_i$ overlap.
	In the feature space $\fspace_1$ of the example in \Cref{fig:SharedBoundary}, this is represented by the red, the green and the orange line having an approximate angle of $60^\circ$ to each other.
	The left loop is represented by an eigenvector $\charEV{\feat}$ with support on the green and orange edges, and vice-versa the right loop by $\charEV{\feat'}$ with support on the green and red edges.
	The homology flow on the middle line on the green edges is a linear combination of the homology flows of both generating loops.
	
	\section{Theoretical Guarantees for Synthetic Data}
	\label{sec:theory}
	In this section, we give a result showing that the algorithm works on a class of synthetic point clouds with an arbitrary number of topological features in arbitrary dimensions.
	The proof utilises the core ideas of the previous section.
	An easy way to realise a flexible class of topological space is to work with the wedge sum operator $\vee$ 
	gluing the two spaces together at a fixed base point. 
	For $k>0$ and two topological spaces $X$ and $Y$ we have that $B_k(X\vee Y)=B_k(X)+B_k(Y)$.
	Hence the wedge sum combines topological features.
	\begin{theorem}
		\label{thm:guarantee}
		Let $\mathbb{P}\subset \R^n$ be a finite point cloud in $\R^n$ that is sampled from a space $X$. Furthermore, let $X=\smash{\bigvee_{i\in \mI}\mS_i^{d_i}}$
		with finite indexing set $\mI$ with $\lvert\mI \rvert>1$ and $0<d\in \N$ be a bouquet of spheres . We assume that the geometric realisation of the simplicial approximation $\SC$ is homotopy-equivalent to $X$, and furthermore that the simplicial subcomplexes for the $\mS^{d_i}$ only overlap in the base-point, and divide $\mS^{d_i}$ into $d_i$-simplices.
		
		Then topological point cloud clustering recovers the different spheres and the base point accurately.
	\end{theorem}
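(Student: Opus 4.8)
The plan is to follow the three stages of \TPCC{} and show that each produces exactly the information needed. Write $K_i\subseteq\SC$ for the subcomplex triangulating $\mS_i^{d_i}$; the hypotheses say that each $K_i$ is pure $d_i$-dimensional, that $\SC$ is built (up to homotopy) from the $K_i$, and that $K_i\cap K_j=\{p\}$ (the basepoint vertex) for $i\ne j$. Let $V_i$ be the set of vertices of $K_i$ other than $p$. The statement to prove is that the output partition of $\mathbb{P}$ is exactly $\{V_i:i\in\mI\}\cup\{\{p\}\}$. The first step is to pin down the harmonic spaces. For each $i$ let $z_i\in\R[\SC_{d_i}]$ be the oriented fundamental cycle of the closed orientable $d_i$-pseudomanifold $K_i$, extended by $0$ on all simplices outside $K_i$; so $z_i$ has entry $\pm1$ on each $d_i$-simplex of $K_i$ and $0$ elsewhere. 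I would first verify $z_i\in\ker L_{d_i}$: it is a cycle, $\bound_{d_i-1}z_i=0$, since $K_i$ is a subcomplex and the fundamental cycle is a cycle there; and it is a cocycle, $\bound_{d_i}^\top z_i=0$, since no $(d_i+1)$-simplex of $\SC$ can have a $d_i$-face inside $K_i$ (such a face is a simplex on $d_i+1\ge2$ vertices lying in $K_i\cap K_j=\{p\}$ for the unique $j$ with $\dim K_j>d_i$ containing that $(d_i+1)$-simplex). The $z_i$ with $d_i=k$ have pairwise disjoint supports, since distinct spheres share no simplex of positive dimension, hence are orthogonal and nonzero; and by the Eckmann--Friedman lemma together with $\betti_k\bigl(\bigvee_i\mS_i^{d_i}\bigr)=\#\{i:d_i=k\}$ we get $\dim\ker L_k=\#\{i:d_i=k\}$, so $\{z_i:d_i=k\}$ is an orthogonal basis of $\ker L_k$. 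In particular each $z_i$ is a characteristic eigenvector $\charEV{\feat_i}$, and they already have disjoint support.

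\emph{Clustering the simplices.} Because subspace clustering is invariant under the unitary ambiguity in the eigenbasis, I may compute the embedding $\iota\colon\SC_k\to\fspace_k=\R^{\betti_k}$ in the basis $\{z_i/\lVert z_i\rVert:d_i=k\}$. A $k$-simplex $\sigma$ of $K_i$ with $d_i=k$ then maps to $\pm\lVert z_i\rVert^{-1}\mathbf e_i$, a nonzero point on the $i$-th coordinate axis, while every other $k$-simplex of $\SC$ maps to the origin (it is not a top simplex of any $K_m$ with $d_m=k$, so all its coordinates vanish). The feature data thus lies exactly on the union of $\betti_k$ distinct lines through the origin together with the origin itself, and an idealised subspace-clustering step labels each line as one cluster: a $k$-simplex gets the label ``sphere $i$'' iff it is a $k$-simplex of $K_i$ with $d_i=k$, and ``non-significant'' otherwise. (For $\betti_k=1$ this is the threshold rule on $|e_\sigma|$; for $\betti_k=0$, e.g.\ $k=0$, all $k$-simplices coincide and a single cluster is returned, contributing nothing to the separation below.)

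\emph{Signatures and the final clustering.} For $x\in V_i$ every simplex containing $x$ lies in $K_i$, and since $K_i$ is pure $d_i$-dimensional, $x$ meets $k$-simplices exactly for $0\le k\le d_i$; after the per-dimension normalisation, $\tau(x)$ therefore records the single label ``sphere $i$'' in dimension $d_i$, ``non-significant'' in dimensions $1,\dots,d_i-1$, and nothing above $d_i$. Hence all points of $V_i$ share one signature, while $V_i,V_j$ for $i\ne j$ differ — either their top nonempty dimension differs, or (if $d_i=d_j$) the label in that dimension differs. The basepoint $p$ meets simplices of every $K_i$; writing $k_0=\min_i d_i$ and using $\lvert\mI\rvert>1$, the labels $p$ collects in dimension $k_0$ involve at least two spheres (two ``sphere'' labels, or one ``sphere'' label plus ``non-significant'' from a higher-dimensional sphere's $k_0$-simplices through $p$), so $\tau(p)$ is a genuine mixture and coincides with no $\tau(x)$, $x\in\bigcup_iV_i$. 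Thus the normalised signatures realise exactly the partition $\{V_i:i\in\mI\}\cup\{\{p\}\}$, which the final $k$-means/spectral clustering returns.

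\emph{Main obstacle.} The delicate step is the first one: proving that the extension-by-zero of each fundamental cycle is genuinely $L_{d_i}$-harmonic in the \emph{global} complex (this is exactly where ``the subcomplexes overlap only in $p$'' and ``$\SC$ is homotopy-equivalent to $X$ with each $\mS_i^{d_i}$ cut into $d_i$-simplices'' are both indispensable), and that these cycles already exhaust $\ker L_{d_i}$ with disjoint supports, so that no unitary mixing by the eigensolver can distort the geometry of $\fspace_{d_i}$. A secondary, more bookkeeping-heavy point is turning ``subspace clustering recovers accurately'' into a precise idealised-oracle statement for data supported on a union of coordinate lines through the origin; granting that, the signature computation is routine and invokes $\lvert\mI\rvert>1$ only at the basepoint.
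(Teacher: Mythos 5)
Your proposal is correct and follows essentially the same route as the paper's own proof: the $\pm1$-valued fundamental cycle of each sphere (the paper's oriented indicator vector $e_i$) is shown to be harmonic because it is a cycle and has no upper-adjacent $(d_i+1)$-simplices, these cycles have disjoint supports and exhaust $\ker L_k$ by the Betti-number count, subspace clustering then labels exactly the top simplices of each sphere, and the topological signatures separate the $V_i$ from each other and from the basepoint. Your treatment is somewhat more explicit than the paper's at two points --- the argument that no $(d_i+1)$-simplex can contain a $d_i$-face of $K_i$, and the case analysis distinguishing the basepoint's mixed signature --- but these are elaborations of the same steps, not a different method.
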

	The full proof is given in \cref{sec:Proof}.
	\section{Numerical Experiments}
	\label{sec:experiments}
	\begin{table*}
		\centering
		\iftoggle{arxiv}{}{\scriptsize}
		\begin{tabular}{lrrrrrrrrr}
			\toprule
			&\TPCC& SpC&$k$-means&\textsmaller{OPTICS} & \textsmaller{DBSCAN}&\textsmaller{AC} &	Mean Shift&\textsmaller{AP}&	ToMATo\\
			\midrule
			$2$ spheres, $2$ circles (\Cref{fig:2spheres2circlesg})	&$\mathbf{0.97}$&$0.70	$&$0.48	$&$0.01	$&$0.00	$&$0.66	$&$0.84	$&$0.01	$&$0.90$\\
			Toy example (\Cref{fig:finalclustering})&$\mathbf{0.98}	$&$0.33	$&$0.28	$&$0.19	$&$0.11	$&$0.33	$&$0.81	$&$0.00	$&$0.91$\\
			Circle with line (\Cref{fig:SharedBoundary})&$\mathbf{0.85}	$&$0.23	$&$0.16	$&$0.11	$&$0.00	$&$0.25	$&$0.00	$&$0.23	$&$0.09$\\
			Sphere in circle, $\mathtt{noise}=0$ (\Cref{fig:Robustness} top)&$\mathbf{1.00}$	&$0.34	$&$0.02	$&$0.19	$&$0.00	$&$0.29	$&$0.00	$&$0.12	$&$0.06$\\
			Sphere in circle, $\mathtt{noise}=0.3$ (\Cref{fig:Robustness} bottom)& $\mathbf{0.53}$&$0.28$& $0.01$ & $0.22$ & $0.30$ & $0.27$ & $0.00$ & $0.13$ & $0.46$\\
			Energy landscape (\Cref{fig:ManifoldAnomalies} left)&$\mathbf{0.88}	$&$0.01	$&$0.01	$&$0.00	$&$0.00	$&$0.13	$&$0.00	$&$0.01$&$-0.02$\\
			\bottomrule
		\end{tabular}
		
		\caption{
			\textbf{Quantitative performance comparison of \TPCC{} with popular clustering algorithms.}
			We show the Adjusted Rand Index of \TPCC, Spectral Clustering (SpC), $k$-means, \textsmaller{OPTICS}, \textsmaller{DBSCAN}, Agglomerative Clustering (\textsmaller{AC}), Mean Shift Clustering, Affinity Propagation (\textsmaller{AP}), and Topological Mode Analysis Tool clustering (ToMATo) evaluated on six data sets. On every data set \TPCC{} performs best, indicating that the other algorithm are not designed for clustering points according to higher-order topological features.
		}
		
		\iftoggle{arxiv}{}{\vspace{-0.5cm}}
		\label{fig:quantitaiveComparison}
	\end{table*}
	\paragraph{Comparison with $k$-means and spectral clustering}
	We validated the effectiveness of \TPCC{} on a number of synthetic examples. In \Cref{fig:2spheres2circlesg}, we have clustered points sampled randomly from two spheres and two circles. 
	The algorithm recovers the spheres and circles.
	Normal (zero-dimensional) Spectral Clustering and $k$-means fail in choosing the right notion of feature, as the figure shows.
	For a visual comparison of \TPCC{} with other clustering algorithms on various datasets see \Cref{fig:comparisonScikitLearn} in the appendix.
	
	\begin{figure}[htb!]
		\begin{center}
			\includegraphics[width=\columnwidth]{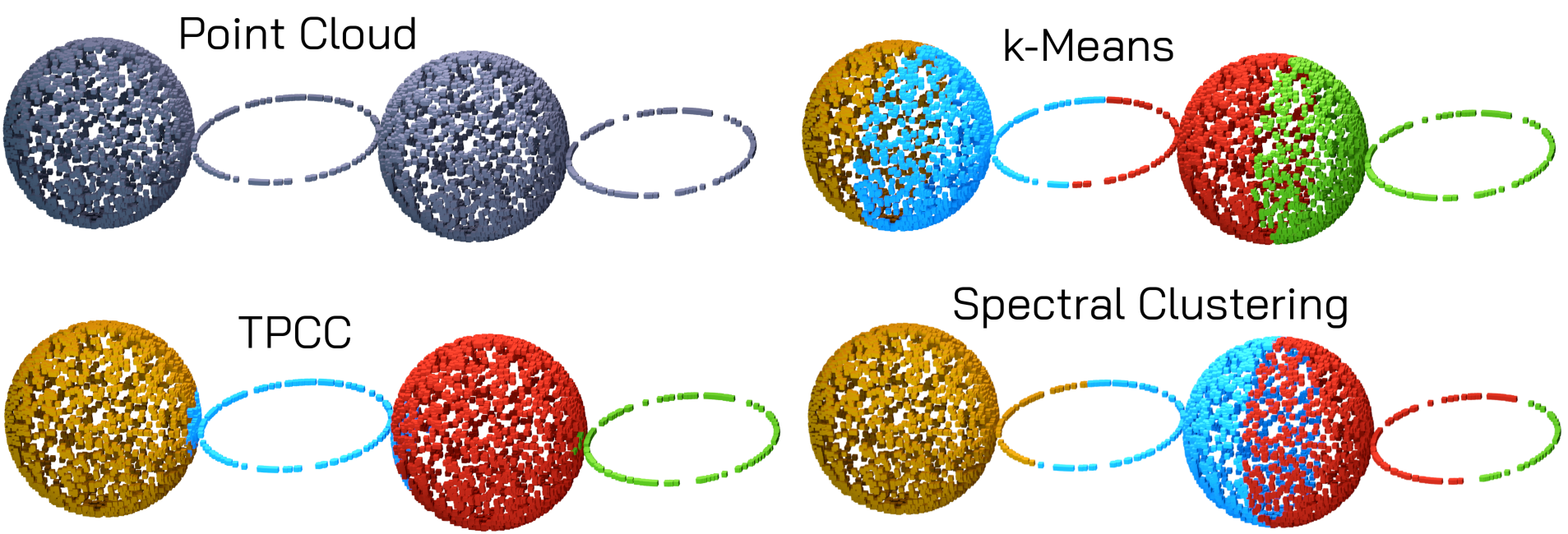}
			\caption{\TPCC{} is the only approach correctly distinguishing the spheres and circles.}
			\label{fig:2spheres2circlesg}
		\end{center}
	\end{figure}
	\label{sec:exp:synthetic}
	\paragraph{Comparison to Manifold Anomaly Detection}
	\begin{figure}[tb!]
		\begin{center}
			\begin{subfigure}{0.49\columnwidth}
				\includegraphics[width=\columnwidth]{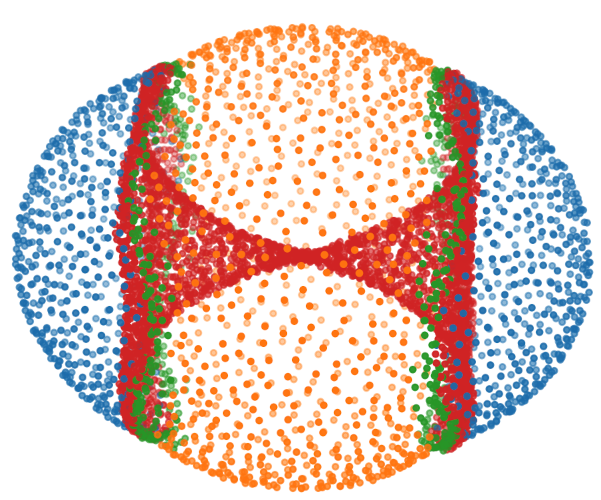}
			\end{subfigure}
			\begin{subfigure}{0.49\columnwidth}
				\includegraphics[width=\columnwidth]{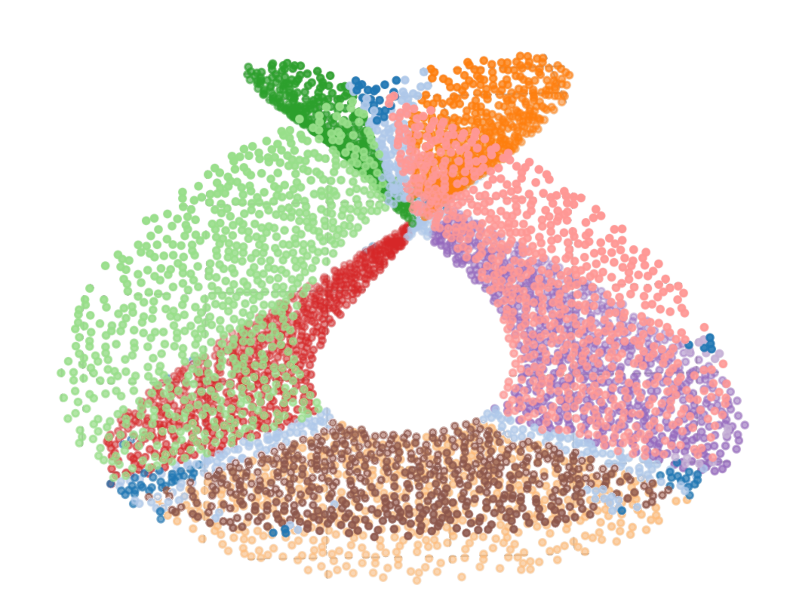}
			\end{subfigure}
			\caption{\emph{Left:} Energy landscape of cyclo-octane clustered by topological point cloud clustering. We have four different clusters, with the green one being the anomalous points.
				\emph{Right:} Clustering of the Henneberg surface.	
			}
			\label{fig:ManifoldAnomalies}
		\end{center}
		\iftoggle{arxiv}{}{\vskip -0.2in}
	\end{figure}
	
	\label{sec:exp:anomalies}
	In \cite{Stolz2020}, the authors propose a topological method for detecting anomalous points on manifolds. In \Cref{fig:ManifoldAnomalies} we use \TPCC{} on the same datasets \cite{Martin2011,Adams:2014} to show that our approach is also able to detect the anomalous points.
	Additionally, our method can classify the remaining points based on topological features.
	\paragraph{Experiments with Synthetic Data}
	\begin{figure}[htb!]
		\begin{center}
			\includegraphics[width=\columnwidth]{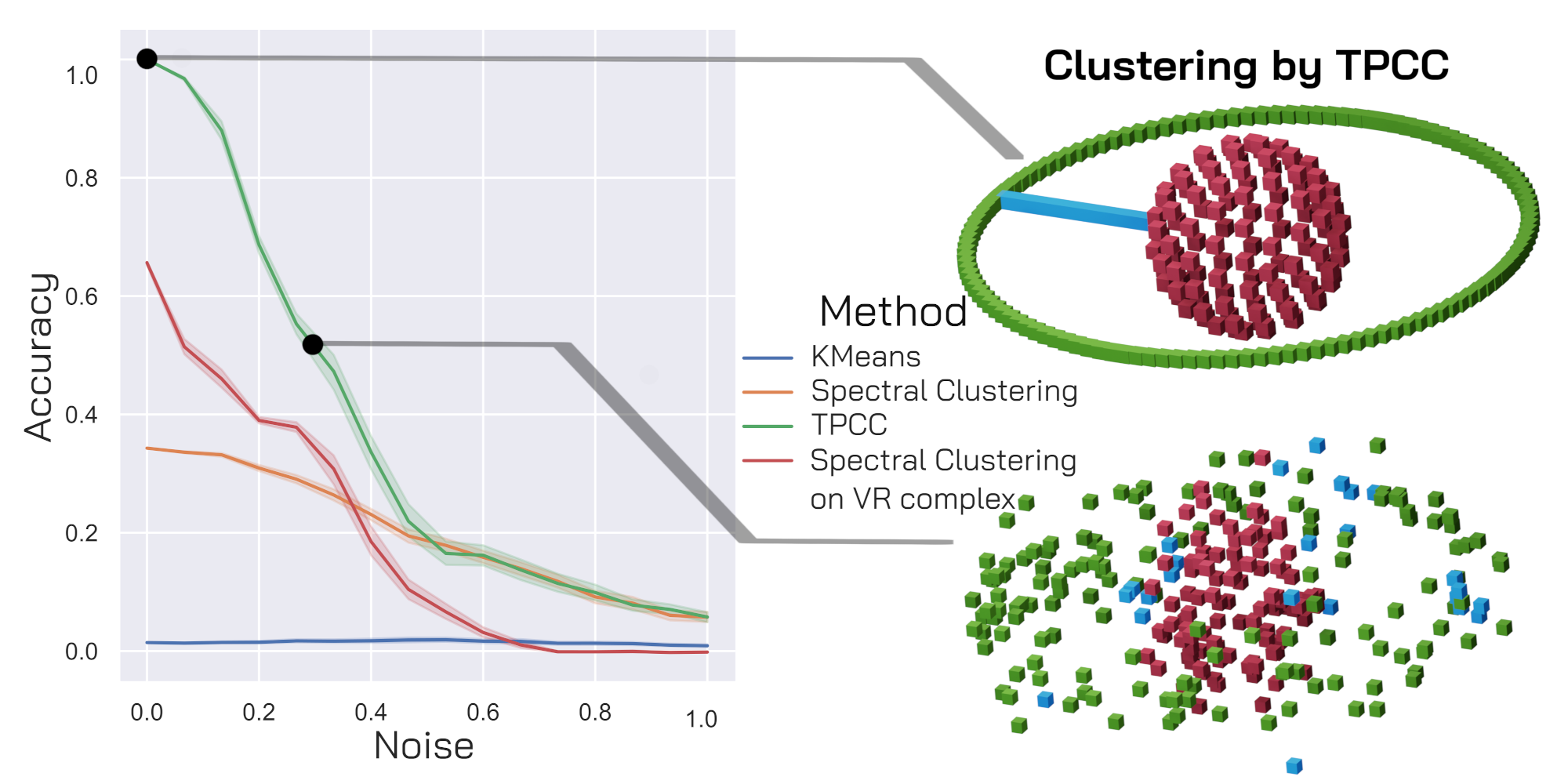}
			\caption{We have added i.i.d.~Gaussian noise with varying standard deviation specified by the parameter $\texttt{noise}$ on all three coordinates of every point. (For scale: The radius of the inner sphere is $1$.) \emph{Left:} Accuracy of \TPCC{}, $k$-Means and two versions of Spectral Clustering with increasing noise level. \emph{Spectral clustering} uses the radial basis affinity matrix, as implemented in scikit-learn. \emph{Spectral Clustering on \textsmaller{VR}~complex} uses the underlying graph of the simplicial complex used for \TPCC{}. Accuracy is measured by adjusted rand index and averaged over 100 samples. \emph{Right:} Example point clouds used for testing and clustering obtained by \TPCC{} for $\mathtt{noise}=0.0$ and $\mathtt{noise}=0.3$. 
			}
			\label{fig:Robustness}
		\end{center}
		\iftoggle{arxiv}{}{\vskip -0.2in}
	\end{figure}
	As we make use of topological features, \TPCC{} is robust against noise by design. 
	We compare the accuracy of the clustering algorithm against $k$-means and spectral clustering on a point cloud consisting of a sphere, a circle, and a connecting line in \Cref{fig:Robustness}. 
	
	On low to medium noise levels, \TPCC{} significantly outperforms all other clustering methods.
	On higher noise levels, the topological features of the point cloud degenerate to features that can be measured by ordinary spectral clustering.
	Then, \TPCC{} and spectral clustering achieve similar accuracy scores.
	In \Cref{fig:Robustness} we see that already a noise setting of $\mathtt{noise}=0.3$ distorts the point cloud significantly, yet \TPCC{} still performs well.
	
	\paragraph{Proteins}
	Proteins are molecules that consist of long strings of amino acid residues.
	They play an integral role in almost every cellular process from metabolism, \textsmaller{DNA}~replication, to intra-cell logistics.
	Their diverse functions are hugely influenced by their complex 3d geometry, which arises by folding the chains of amino acid residues.
	The available data of protein sequences and 3d structure has increased dramatically over the last decades.
	However, functional annotations of the sequences, providing a gateway for understanding protein behaviour, are missing for most of the proteins.
	\cite{Smaili:2021} have shown that harnessing structural information on the atoms can significantly increase prediction accuracy of \textsmaller{ML}~pipelines for functional annotations.
	Thus being able to extract topological information on individual atoms of proteins is very desirable for applications in drug discovery, medicine, and biology.
	
	We tested \TPCC{} on \textsmaller{NALCN}~channelosome, a protein found in the membranes of human neurons \cite{Zhou:2022, Kschonsak:2022}. 
	The \textsmaller{NALCN}~channel regulates the membrane potential, enabling neurons to modulate respiration, circadian rhythm, locomotion and pain sensitivity. 
	It has a complex topological structure enclosing $3$ holes that are linked to its function as a membrane protein.
	
	The core idea is that when biological and topological roles correlate, \TPCC{} offers a way to better understand \emph{both}.
	\begin{figure}[tb!]
		\vskip 0.2in
		\begin{center}
			\includegraphics[width=\columnwidth]{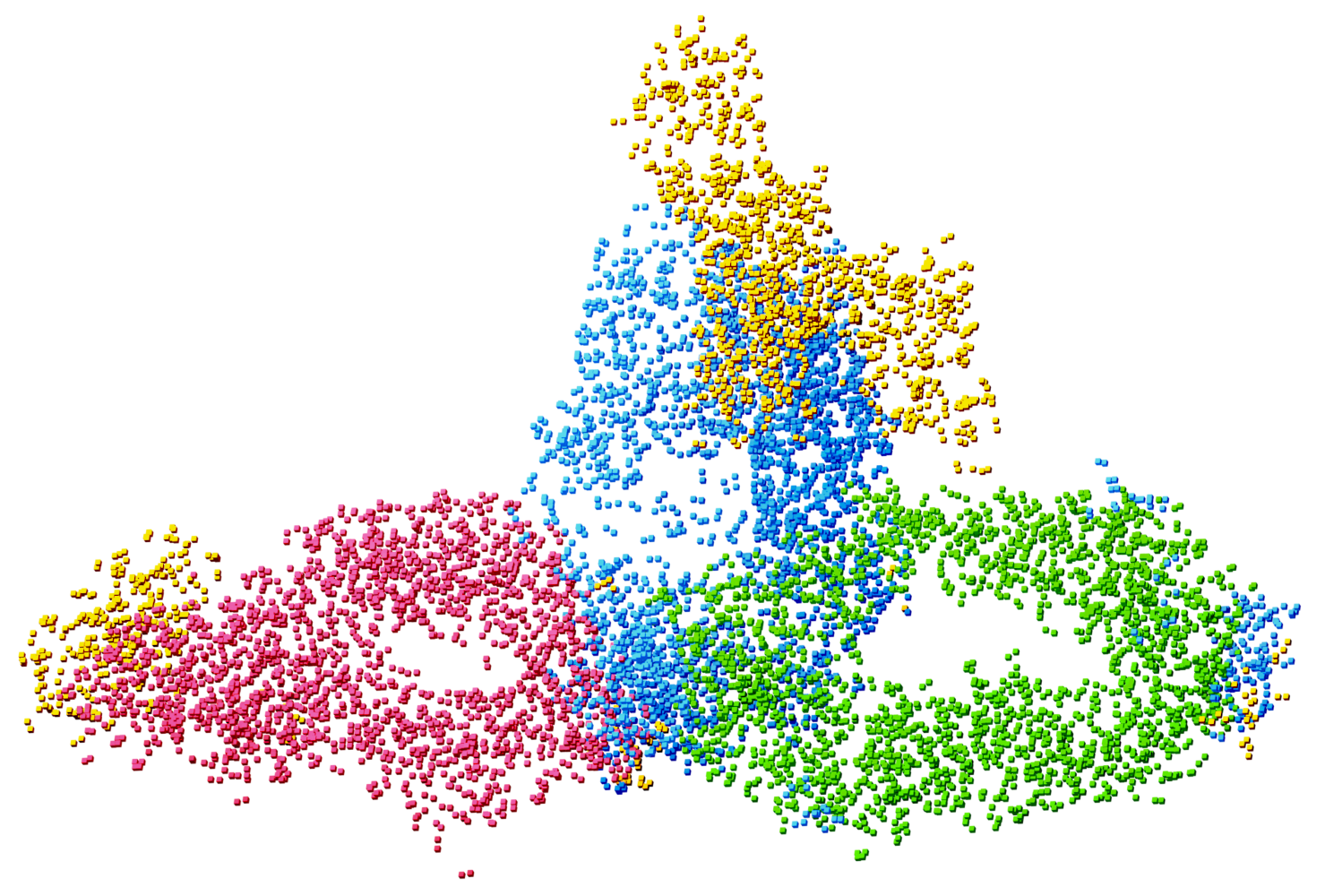}
			\caption{Clustered atoms of \textsmaller{NALCN}~channelosome. Points that border one of the holes are coloured red, blue, and green. The points without contribution to a loop are marked in yellow.}
			\label{fig:proteins}
		\end{center}
		\iftoggle{arxiv}{}{\vskip -0.2in}
	\end{figure}
	\label{sec:exp:proteins}
	\section{Discussion}
	\paragraph{Limitations}
	\TPCC{} can only cluster according to features that are visible to homology, e.g.~connected components, loops, holes, and cavities.
	For example, \TPCC{} cannot distinguish differently curved parts of lines or general manifolds.
	\TPCC{} constructs a simplicial complex (\textsmaller{SC}) to extract topological information
	Thus it needs to pick a single scale for every \textsmaller{SC}.
	If the topological information of the point cloud lie in different scales, \TPCC{} thus needs to do multiple feature aggregation steps for \textsmaller{SC}s of different scale.
	Finally, the points can be clustered according to the combined features.
	However, for each different scale the entire zero-eigenspace of the Hodge~Laplacian needs to be considered.
	Future work will focus on a method to cluster points based on the most persistent topological features across all scales.
	
	Persistent homology and the calculation of the zero eigenvectors of the Hodge Laplacian are computationally expensive and thus running \TPCC{} directly is not feasible on large data sets.
	However, usually the topological information can already be encoded in small subsets of the entire point cloud.
	In \Cref{table:Scalability} we show that \TPCC{} in combination with landmark sampling scales well for larger data sets while achieving high clustering performance.
	In addition, we believe that the main advantage of \TPCC{} is that it can do something no other existing point cloud clustering algorithm can do or was designed for, namely clustering points according to higher order topological features.
	Future work will focus on additionally improving efficiency by removing the need to compute the entire zero-eigenspace of the Hodge-Laplace operators.
	
	Because \TPCC{} uses persistent homology, it is robust against small perturbations by design.
	In \Cref{fig:Robustness} we analysed its clustering performance under varying levels of noise.
	However, with high noise levels, topological features vanish from persistent homology and thus \TPCC{} cannot detect them anymore.
	In future work, we try to take near-zero eigenvectors of the Hodge Laplacian into account, representing topological features contaminated by noise.
	This is similar to Spectral Clustering, where the near-zero eigenvectors represent almost-disconnected components of the graph.
	\paragraph{Conclusion}
	\TPCC{} is a novel clustering algorithm respecting topological features of the point cloud.
	We have shown that it performs well both on synthetic data and real-world data and provided certain theoretical guarantees for its accuracy.
	\TPCC{} produces meaningful clustering across various levels of noise, outperforming $k$-means and classical spectral clustering on several tasks and incorporating higher-order information.
	
	Due to its theoretical flexibility, \TPCC{} can be built on top of various simplicial or cellular representations of point clouds.
	Interesting future research might explore combinations with the mapper algorithms or cellular complexes.
	In particular, applications in large-scale analysis of protein data constitute a possible next step for \TPCC.
	\TPCC{} or one of its intermediate steps has potential as a pre-processing step for deep learning techniques, making topological information about points accessible for \textsmaller{ML} pipelines.
	\iftoggle{arxiv}{
	}{
\section*{Acknowledgments}
The authors thank the anonymous reviewers for helpful feedback that improved the paper.

\textsmaller{VPG} and \textsmaller{MTS} acknowledge funding by the \textsmaller{DFG} (German Research Foundation) – Research Training Group \textsmaller{UnRAVeL} 2236/2. \textsmaller{MTS} acknowledges funding by the Ministry of Culture and Science (\textsmaller{MKW}) of the German State of North
Rhine-Westphalia (\textsmaller{NRW} Rückkehrprogramm) and the European Union (\textsmaller{ERC}, \textsmaller{HIGH-HOPeS}, 101039827).
Views and opinions expressed are however those of \textsmaller{MTS} only and do not necessarily reflect those of the
European Union or the European Research Council Executive Agency; neither the European Union nor the
granting authority can be held responsible for them.
}
	\iftoggle{arxiv}{
	\bibliographystyle{acm}
}{
	\bibliographystyle{icml2023}	
}
\bibliography{refpc.bib}
	\appendix
	\section{Implementation}
	\label{sec:Algorithm}	
	\begin{figure*}[tb!]
		\begin{center}
			\includegraphics[width=\textwidth]{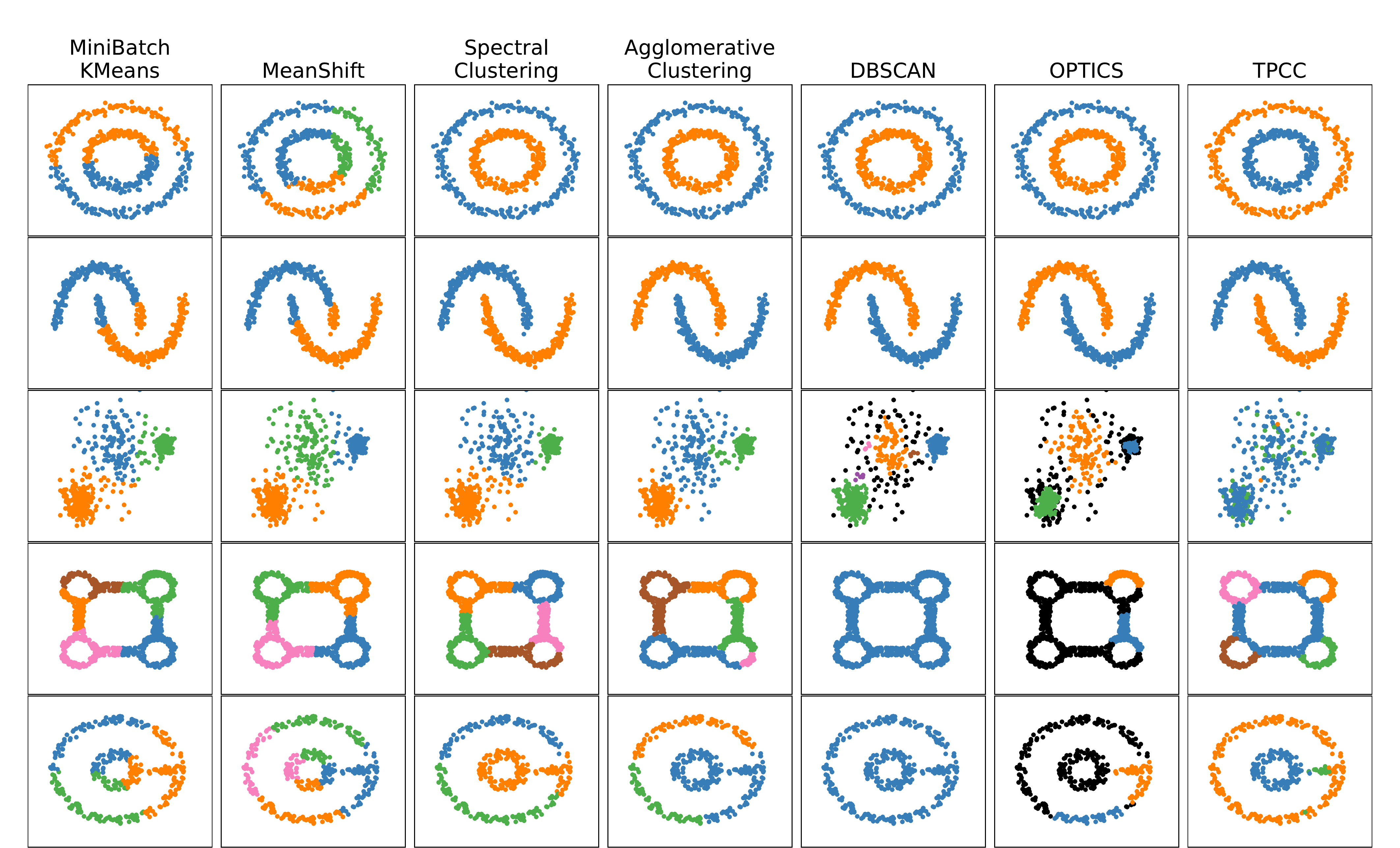}
			\caption{\textbf{Comparison of clusterings produced by \TPCC{} and other clustering algorithms on existing and new datasets.} While \TPCC{} is not able to find any structure in the third dataset, it identifies the topological substructures in the fourth and fifth dataset.
				(Cf. \texttt{scikit-learn}, \cite{scikit-learn}) While \TPCC{} is the only algorithm clustering the four connected circles respecting the topology, it still is inconsistent in whether to assign the shared parts of circles and rectangle to the cluster of the circle or the connecting lines. Theoretically, these shared parts would belong to 4 additional clusters. However, this would require the subspace clustering to identify $9$ different linear subspaces, which is not feasible with the implementation of subspace clustering we were using.}
			\label{fig:comparisonScikitLearn}
			\vspace{-0.5cm}
		\end{center}
	\end{figure*}
	To construct the simplicial complex used in \TPCC{} from a point cloud, we first computed a persistence diagram.
	Then we selected the parameter $\varepsilon$ in the range of the most persistent homology features. Hence, we connected all points $p_1$ and $p_2$ with $\|p_1-p_2\|_2<\varepsilon$.
	We also chose until which dimension we build the simplicial complex by looking at the topological features of the underlying point cloud.
	In practice, on all considered data sets the maximum dimension of topological features was $2$.
	Hence building the simplicial complex up to dimension $3$ suffices:
	We note that computing information on the $k$-th homology group and the $k$-th Betti number requires the simplices of dimension up to $k+1$.
	This is reflected in the shape of the $k$-th Hodge Laplacian $L_k\coloneqq\bound_{k-1}^\top \bound_{k-1}+\bound_k\bound_k^\top $ featuring $\bound_k$.
	The $k$-th boundary matrix $\bound_k$ maps $(k+1)$-simplices to $k$-simplices.
	\paragraph{Computational Complexity}
	
	\begin{table}
		\centering
		
		\scriptsize
		\begin{tabular}{ll|rr}
			\toprule
			& & \texttt{2spheres} & \texttt{6spheres} \\
			\midrule
			&Number of points&\num{4600}&\num{33600}\\
			\midrule
			\textbf{\TPCC} &Landmark sampling& \SI{0.7}{s} & \SI{48.7}{s} \\
			&Persistent homology& \SI{2.0}{s} &\SI{4.1}{s}\\
			&Eigenvector computation& \SI{3.6}{s} & \SI{31.4}{s}\\
			&Sum of times& \SI{6.3}{s}&$\mathbf{\SI{84.2}{s}}$\\
			&Adjusted Rand Index & \num{0.93} & \num{0.94} \\
			\midrule
			\textbf{\TPCC +witness} & Landmark sampling& \SI{0.7}{s} & \SI{48.7}{s} \\
			& Witness complex & \SI{0.2}{s}&\SI{615.5}{s}\\
			& Persistent homology & \SI{0.5}{s}& \SI{4.7}{s}\\
			& Eigenvector computation & \SI{5.4}{s}& \SI{19.7}{s}\\
			& Sum of times & \SI{6.8}{s}& \SI{688.6}{s}\\
			& Adjusted Rand Index & $\mathbf{\num{0.95}}$ &$\mathbf{\num{0.97}}$\\
			\midrule
			\textbf{SpC}& Time & $\mathbf{\SI{1.7}{s}}$ &\SI{346.3}{s} \\
			& Adjusted Rand Index & \num{0.71} & \num{0.47}\\
			\bottomrule
		\end{tabular}
		
		\caption{We test the scalability of clustering approaches using \TPCC{}. We compare the accuracy the running time and the Adjusted Rand Index (\textsmaller{ARI}) of \TPCC{} and Spectral Clustering on the data set of \cref{fig:2spheres2circlesg} and on a version with more points, spheres and circles. We also compare two different versions of constructing the \textsmaller{SC} in step 1 of \TPCC{}: We used a naive python implementation of min-max landmark sampling to select respectively \num{400} or \num{1200} landmarks. For the first version we constructed the Vietoris--Rips complex directly on this point cloud. For \TPCC +witness we constructed the witness complex based on the landmarks and the entire data set. After running \TPCC{}, we cluster the remaining points using a $1$-nearest neighbour approach. Both the \TPCC{} approaches achieved a significantly higher \textsmaller{ARI}{} than Spectral Clustering. On the large data set, \TPCC{} using landmark sampling and a \textsmaller{VR} construction had a significantly smaller running time then basic Spectral Clustering.
		}
		
		\iftoggle{arxiv}{}{\vspace{-0.5cm}}
		\label{table:Scalability}
	\end{table}
	
	Persistent homology and the calculation of the zero eigenvectors of the Hodge Laplacian are computationally expensive and thus \TPCC{} in its pure form does not scale well for large data sets.
	The complexity of random sparse eigensolvers is approximately $O(kT+k^2n)$ for $n\times n$ matrices where $k$ is the number of desired eigenvectors and $T$ is the number of flops required for one sparse matrix vector multiplication \cite{Halko2011}.
	The number of non-zero values in the $k$-th Hodge~Laplacian is bounded by the number of ordered pairs of upper-adjacent and of lower-adjacent $k$-simplices.
	For a fixed point density, fixed $\varepsilon$, and fixed $k$, the number of $k$-simplices $n$ is linear in the number of points.
	
	However, we believe that the main advantage of \TPCC{} is that it can do something no other existing point cloud clustering algorithm can do or was designed for, namely clustering points according to higher order topological features.
	
	Because \TPCC{} is agnostic to the type of simplicial complex constructed, its computational scalability can easily be improved by using a more efficient construction than Vietoris--Rips.
	Usually, the topological information of a data set is already contained in a small subset of the points.
	It is thus possible to use a witness complex construction \cite{Silva2004}, or to sample landmark points representing the topological structure, doing \TPCC{} on them, and then clustering the remaining points using k-nearest neighbours.
	
	In \Cref{table:Scalability}, we show that using \TPCC{} in conjunction with min-max landmark sampling and a $k$-nearest neighbour approach to classify the remaining points scales well to larger data sets while maintaining a high accuracy.
	
	\paragraph{Min-Max landmark sampling}
	Min-max landmark sampling provides a way to approximate the topology of a point cloud using a set of landmarks $L$.
	For a point cloud $X$, a distance function $d\colon X\times X\rightarrow \R_{\ge 0}$, and a desired number of landmarks $1\le k\le \lvert X\rvert$ we first sample a point $x\in X$ uniformly at random and add it to the set of landmarks $L$. Then we iteratively add the point $x\in X$ maximising the expression
	\[
	\min_{l\in L} d(l,x) 
	\]
	to $L$ until $\lvert L\rvert = k$. (For example, compare \cite{Perea2020}.)
	
	\paragraph{Witness complex}
	The (weak) witness complex, as introduced in \cite{Silva2004}, provides a way to approximate the topology of a large point cloud by a simplicial complex with significantly fewer vertices.
	It takes as input the original point cloud $X$, a set of landmarks $L$ in an ambient Euclidean space, and a parameter $R$ determining the length of edges.
	It then constructs a simplicial complex on $L$, where the simplices are added based on whether they are "witnessed" by points in $X$.
	While witness complexes are very robust topological approximators, their construction is computationally demanding for large point clouds $X$.
	
	\paragraph{Supplementary material}
	Code of our implementation to reproduce the experimental results will be made available in the supplementary material.
	\paragraph{Software used}
	We implemented the algorithm in python. We use the Gudhi library \cite{gudhi:urm} for all topology-related computations and operations. For general arithmetic and clustering purposes we use NumPy \cite{NumPy}, scikit-learn \cite{scikit-learn}, and ARPACK \cite{ARPACK}. For subspace clustering, we use DiSC \cite{Zografos:2013}. For 3d visualisation, we use blender with blendplot \cite{blendplot}.
	\section{Running Example}
	\label{sec:runningexamplelong}
	\begin{figure}[htb!]
		\begin{center}
			\centerline{\includegraphics[width=\columnwidth]{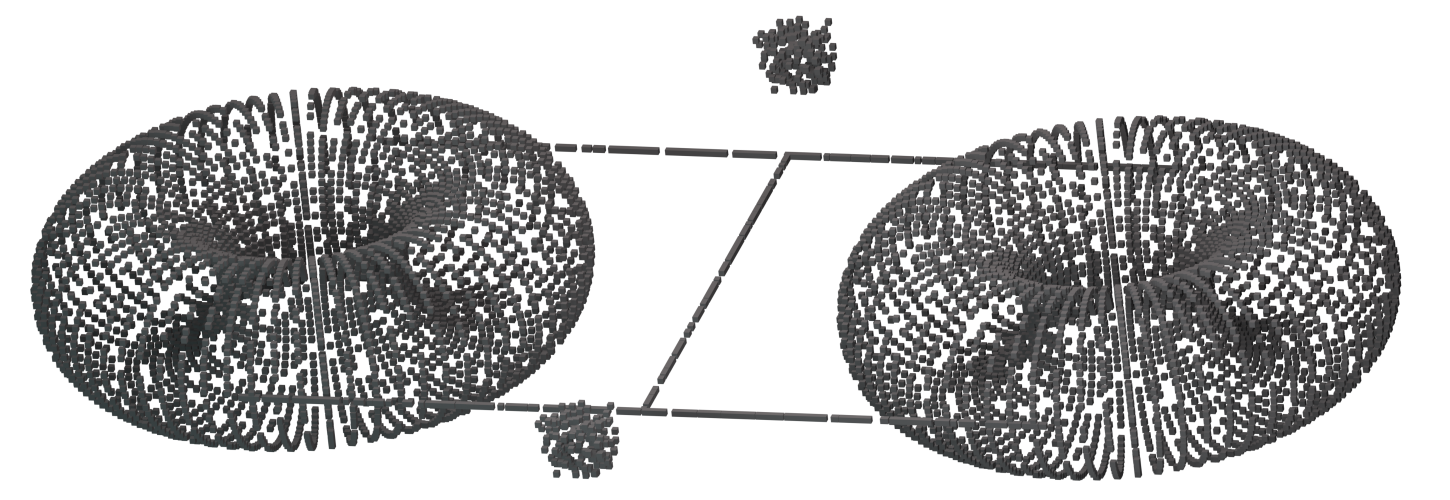}}
			\caption{The point cloud of our toy example projected to 3d space. Every point is represented by a small cube.}
			\label{fig:PointCloudExample}
		\end{center}
	\end{figure}
	Our toy example consists of two tori. 
	A torus can be seen as a doughnut where we removed the filling. 
	Topologically, it consists of a single connected component. 
	Hence its $0$-th Betti number $B_0$ is $1$, counting the number of connected components. 
	There are two main directions a $1$-dimensional loop can wrap around a torus. 
	First, there is the large loop going around the entire circle spanned by the torus. 
	Second, a loop can just wrap around a side of a torus.
	All other loops can be generated by concatenation of the previous two types of loops.
	Hence the $1$\textsuperscript{st} Betti number $B_1$ is $2$, counting the number of generating loops.
	Finally, there is a single $2$-dimensional cavity in a torus, representing the void left behind by removing the filling of the doughnut.
	Thus, the $2$\textsuperscript{nd} Betti number $B_2$ of a torus is $2$.
	We can embed a torus in $4$-dimensional space by taking it to be the product of two $1$-dimensional spheres.
	Note that we project the tori to $3$-dimensional space only for better readability in our plots.
	We sample the point cloud by first taking $5000$ points in a grid on each of the tori.
	We then randomly forget $20\%$ of the points in order to simulate noise.
	The tori are connected by two straight lines, from which we each sample $300$ points uniformly at random. 
	We connect the two lines by another straight line with $300$ randomly sampled points.
	The three lines add two more loops to the topological space
	Finally, we sample $200$ points uniformly at random from two cubes not connected with the rest of the topological space.
	Our point cloud has $11$ topological features across $3$ dimensions.
	In terms of Betti numbers, we have $B_0=3$, $B_1=6$, and $B_2=2$.
	
	\section{Technical considerations}
	\paragraph{Multi-dimensional subspaces of the feature spaces $\fspace_k$}
	\label{sec:multidimensional}
	\begin{figure}[htb!]
		\begin{center}
			\centerline{\includegraphics[width=\columnwidth]{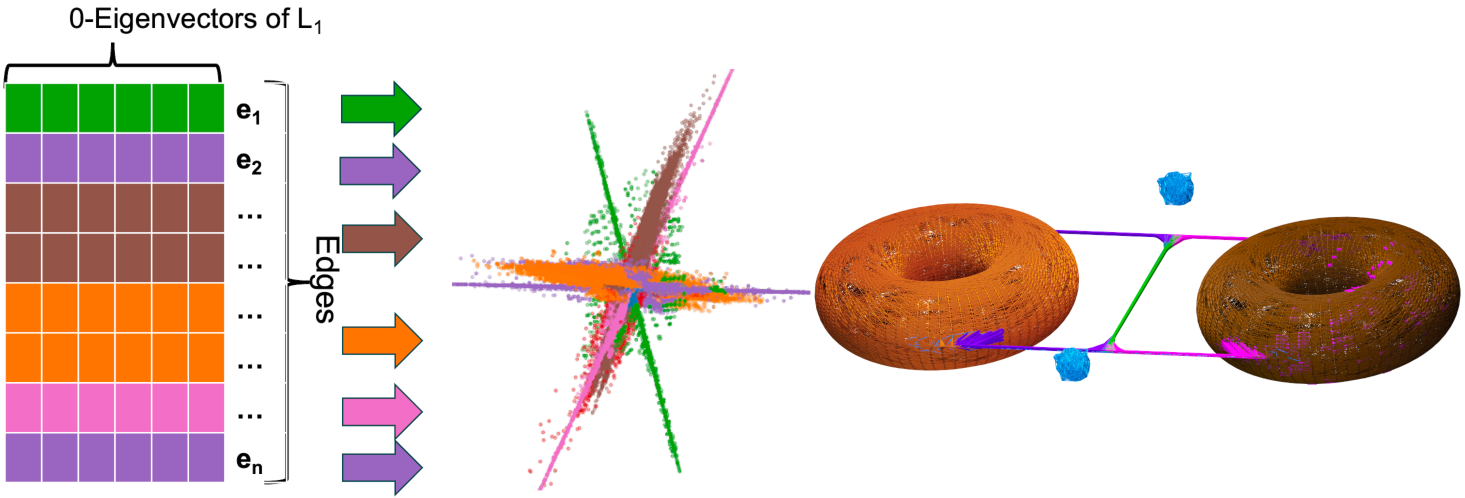}}
			\caption{We cluster the edges of the simplicial complex $\SC$ depicted in \Cref{fig:fig1}, our toy example. Its first Betti number $\betti_1(\SC)$ is $6$, corresponding to a $6$-dimensional zero-eigenspace of $L_1$. We show a projection of the $6$-dimensional feature space $\fspace_1$ to $3$-dimensional space. There are six different subspace clusters: three $1$-dimensional lines in purple, green, and pink corresponding to ordinary loops in the point cloud. Furthermore, there are two $2$-dimensional subspaces marked in brown and orange. They represent the edges in the two tori of our data set. Finally, there is one $0$-dimensional cluster corresponding to the edges without a contribution to homology in the two cubes marked in blue.}
			\label{fig:2dimsubspace}
		\end{center}
		\vskip -0.2in
	\end{figure}
	In practice, most of the subspaces of the feature space $\fspace_k$ used for subspace clustering are $1$-dimensional\footnote{Although we could regard the simplices without homological contribution as lying in a $0$-dimensional subspace.}.
	However, sometimes more complex substructures arise:
	Recall that our toy point cloud (\Cref{fig:fig1}, Input) consisted of two tori.
	The $1$-homology of each of the tori is generated by two loops, one of which follows the larger circle of the associated filled doughnut, the other one encircling a slice of the doughnut.
	Now imagine an edge $e$ starting in one of the outermost points of the torus.
	If the edge faces in a left or right direction, it will only contribute to the first loop. If it faces up or down, it only contributes o the second loop.
	However, an edge can point in an arbitrary superposition of the two directions.
	Thus also its homological contribution will be an arbitrary superposition of the two generating loops of the tori.
	In other words, the embeddings into the feature space $\iota(e)\in \fspace=\R^{B_k}$ of the edges $e$ running along the generating loops of the tori correspond to points on two orthogonal lines.
	The embeddings of all other edges on the surface of the torus lie on the $2$-dimensional subspace of $\fspace_1$ spanned by the two lines.
	Because the angles of the edges can vary continuously, the edges correspond to arbitrary points on the $2$-dimensional subspace.
	Thus, we propose clustering the edges based on membership in arbitrary-dimensional subspaces.
	In the toy point cloud example, we can hence measure to which torus an edge belongs by identifying the $2$-dimensional subspace its eigenvector coordinates lie on. 
	This is illustrated in \Cref{fig:2dimsubspace}.
	By allowing for detection of arbitrary dimensional subspaces of $\fspace_1$ our approach is able to detect significantly more topological features than the precursor approach in \cite{Ebli2019}.
	
	\paragraph{Extracting the dimensionality of topological features}
	\TPCC{} not only distinguishes different topological features, it is also capable of extracting additional information on the features. In particular, there are two ways the framework can distinguish between different dimensionalities of the features:
	
	\begin{enumerate}[I.]
		\item A $1$-dimensional loop will appear in the zero-eigenspace of the $1$-Hodge Laplacian, whereas a $2$-dimensional boundary of a void will appear in the $0$-eigenspace of the $2$-Hodge Laplacian.
		This information can easily be relayed back to the points.
		
		\item Topological features will manifest as linear subspaces of different dimensions of the zero-eigenspaces of the corresponding Hodge Laplace operators.
		Usually, these subspaces will be $1$-dimensional. 
		The subspace corresponding to the first homology group of the torus is however $2$-dimensional.
		(Cf. the previous paragraph.)
		This is because edges on the torus can point in arbitrary superpositions of the two "generating homological dimensions".
		(This, by Hurewicz's thm., corresponds to the respective generators of the fundamental group commuting with each other.)
		We can view this as the feature being another notion of $2$-dimensional and relay the information back to the points.
	\end{enumerate}
\section{Proof of Theorem \ref{thm:guarantee}}
\label{sec:Proof}
\begin{theorem*}
		Let $\mathbb{P}\subset \R^n$ be a finite point cloud in $\R^n$ that is sampled from a space $X$. Furthermore, let $X=\smash{\bigvee_{i\in \mI}\mS_i^{d_i}}$
		with finite indexing set $\mI$ with $\lvert\mI \rvert>1$ and $0<d\in \N$ be a bouquet of spheres . We assume that the geometric realisation of the simplicial approximation $\SC$ is homotopy-equivalent to $X$, and furthermore that the simplicial subcomplexes for the $\mS^{d_i}$ only overlap in the base-point, and divide $\mS^{d_i}$ into $d_i$-simplices.
		
		Then topological point cloud clustering recovers the different spheres and the base point accurately.
\end{theorem*}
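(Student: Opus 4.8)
Write $\SC=\bigcup_{i\in\mI}T_i$, where $T_i$ is the given triangulation of $\mS^{d_i}$ and $T_i\cap T_j=\{b\}$ for $i\neq j$, with $b$ the common base point. The plan is to track the output of the three stages of \TPCC{} in turn — the $0$-eigenvectors of the Hodge Laplacians, the subspace clustering of the simplices, and the aggregation into topological signatures — and to show that the resulting clustering of $\mathbb{P}$ is exactly the partition into the sets $V_i\coloneqq\{\,v:v\text{ a vertex of }T_i,\ v\neq b\,\}$, $i\in\mI$, together with the singleton $\{b\}$.

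\textbf{Step 1: localizing the harmonic cochains.} First I would extract two combinatorial consequences of the hypotheses. \emph{(a)} For $k\geq1$, every $k$-simplex of $\SC$ lies in a unique $T_i$: it has a vertex $v\neq b$ belonging to only one $T_i$, and its remaining vertices, being neighbours of $v$, also lie in $T_i$. \emph{(b)} No $d_i$-simplex of $T_i$ is a face of a $(d_i{+}1)$-simplex of $\SC$: by (a) such a coface would itself lie in $T_i$, which is impossible since $T_i$ triangulates the $d_i$-dimensional space $\mS^{d_i}$. I would then let $z_i$ be the fundamental cycle of $T_i$ (the signed sum of its top-dimensional simplices, a generator of $H_{d_i}(T_i)\cong\Z$), extended by $0$ over $\SC_{d_i}$. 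Then $\bound_{d_i-1}z_i=0$ because $z_i$ is already a cycle inside $T_i$, and $\bound_{d_i}^\top z_i=0$ by (b); hence $L_{d_i}z_i=0$. The cochains $z_i$ with $d_i=k$ have pairwise disjoint supports by (a) and each is nonzero on every top simplex of $T_i$, so they are linearly independent. Since $\rk\ker L_k=\betti_k(\SC)=\betti_k(X)=\#\{\,i:d_i=k\,\}$ for $k\geq1$ (additivity of reduced homology under $\vee$, together with $|\SC|\simeq X$), these $z_i$ form an orthogonal basis of $\ker L_k$, and we may take $\charEV{\feat_i}=z_i$. For $k=0$, $\ker L_0$ is spanned by the all-ones vector because $\SC$ is connected.

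\textbf{Step 2: clustering the simplices.} An eigensolver returns an arbitrary orthonormal basis $e_1,\dots,e_{\betti_k}$ of $\ker L_k$, i.e.\ $e_m=\sum_{j:d_j=k}u_{mj}\widehat{z}_j$ for a unitary matrix $(u_{mj})$ and normalized $\widehat{z}_j$. For a top simplex $\sigma$ of $T_i$ one computes $e_m(\sigma)=u_{mi}\,\widehat{z}_i(\sigma)$, so the embedding $\iota(\sigma)\in\fspace_k$ is a nonzero multiple of the fixed unit vector $(u_{1i},\dots,u_{\betti_k i})$; hence all top simplices of $T_i$ lie on one line $\ell_i$, and the $\ell_i$ are mutually orthogonal (columns of a unitary matrix), in particular distinct. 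Every $k$-simplex belonging to a sphere of dimension $>k$ is annihilated by all $\charEV{\feat_j}$ with $d_j=k$ (disjoint supports), so it maps to the origin. Subspace clustering therefore recovers, in each dimension $k\geq1$, exactly the clusters $C_i=\{\text{top simplices of }T_i:\,d_i=k\}$ plus one trivial (zero) cluster, and in dimension $0$ a single cluster of all vertices — producing the simplex-level labels $C(\cdot)$.

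\textbf{Step 3: from simplices to points, and the main obstacle.} Finally I would verify that the topological signatures $\tau(\cdot)$ of Definition~\ref{def:topsignature} separate the points correctly. For $x\in V_i$, every simplex containing $x$ lies in $T_i$ by (a), so after normalization $\tau(x)$ carries all its positive mass in cluster $C_i$ (in dimension $d_i$) and in trivial clusters — the same vector for every $x\in V_i$. For the base point $b$: it lies in $T_j$ for every $j$, and in any triangulation of a closed manifold each vertex lies in some top-dimensional simplex, so $\tau(b)$ has positive mass in cluster $C_j$ for every $j\in\mI$; whereas $\tau(x)$ with $x\in V_i$ has zero mass in $C_j$ for $j\neq i$ (such an $x$ lies in no simplex of $T_j$). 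Since $|\mI|>1$ we may pick $j\neq i$, giving $\tau(b)\neq\tau(x)$ for all $x\neq b$, and $\tau(x)\neq\tau(x')$ for $x\in V_i$, $x'\in V_j$, $i\neq j$ (the labels $C_i,C_j$ differ, even when $d_i=d_j$). Hence $\tau$ takes exactly $|\mI|+1$ distinct values and the final clustering returns the $V_i$ and $\{b\}$. I expect this last bookkeeping — making sure the base point is never merged into a sphere's cluster and that equidimensional spheres stay apart, i.e.\ that cluster \emph{labels}, not merely subspaces, propagate faithfully from simplices to points through the signature — to be the only genuinely delicate point; Steps 1 and 2 are essentially forced by the combinatorics together with $\rk\ker L_k=\betti_k$.
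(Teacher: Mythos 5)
Your proposal is correct and follows essentially the same route as the paper's proof: build the signed indicator (fundamental-cycle) vectors supported on the top simplices of each sphere, verify they are killed by both the up- and down-parts of the Hodge Laplacian, count against the Betti numbers to conclude they span the kernels, and then track subspace clustering and the topological signatures to separate the spheres and the base point. Your Steps 2 and 3 merely spell out in more detail (unitary change of basis, the base point lying in top simplices of every sphere) what the paper states more briefly; there is no substantive difference in approach.
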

	\begin{proof}
	The $k$-th Betti number of $\SC$ is equal to the number of $i\in \mathcal{I}$ with $d_i=k$ (Cor. 2.25 \cite{Hatcher:2002}).
	Because spheres are orientable, we can simply assume that the $d_i$-simplices in $\mS_i^{d_i}$ are oriented such that each two adjacent $d_i$-simplices induce opposite orientations on the shared $(d_i)$-simplex.
	We now claim that for each $i\in \mI$ the indicator vector $e_i$ on the $d_i$-simplices in $\mS_i^{d_i}$ is an eigenvector of the $d_i$-th Hodge Laplacian $L_i$ of $\SC$.
	Because of our assumption on $\SC$, there are no $(d_i+1)$-simplices upper-adjacent to the $d_i$-simplices of $\mS_i^{d_i}$.
	Hence, we obtain the first half of our claim, namely that $\bound_{d_i}\bound_{d_i}^\top e_i=0$ holds.
	We have assumed that $\SC$ was constructed in such a way that each $(d_i-1)$-simplex $\sigma_{d_i-1}$ of $\smash{\mS_i^{d_i}}$ has exactly two upper-adjacent neighbours $\sigma^1_{d_i}$ and $\sigma^2_{d_i}$.
	Because $\smash{\sigma^1_{d_i}}$ and $\smash{\sigma^2_{d_i}}$ induce the opposite orientation on $\sigma_{d_i-1}$, the corresponding entries of the $(d_i-1)$-th boundary matrix $\bound_{d_i-1}$ of $\SC$ are $1$ and $-1$.
	Thus we also have $\bound_{d_i-1}e_i=0$ and finally $\smash{L_{d_i}e_i=\bound_{d_i}\bound_{d_i}^\top e_i+\bound_{d_i-1}^\top \bound_{d_i-1}e_i=0}$.
	This proves the claim.
	
	The eigenvectors $e_i$ of the same dimension are orthogonal and match in number with the corresponding Betti number of $\SC$. 
	Hence the $e_i$ span the eigenspaces of the Hodge Laplace operators of $\SC$.
	For all $i\in\mI$ the entries of the $d_i$-simplices in $\mS_i^{d_i}$ in the matching zero eigenvectors $e_j$ are $1$ for $j=i$, and $0$ else.
	All other $d$-simplices for $d>0$ have trivial eigenvector entries.
	Thus, subspace clustering recovers the top-level simplices in each of the spheres and assigns every other simplex to the trivial homology cluster.
	The topological signature of the points in the sphere $\mS^{d_i}_i$ in dimension $d_i$ will then feature a characteristic cluster of $(d_i)$-simplices and a trivial signature across the other dimensions.
	Finally, the topological signatures of the base point will feature all characteristic clusters.
	Hence $k$-means on the topological signatures can distinguish the points on the different spheres and the base point.
\end{proof}

\end{document}